\documentclass[reqno,centertags, 12pt]{amsart}
\usepackage{amsmath,amsthm,amscd,amssymb}
\usepackage{esint}  
\usepackage{latexsym}
\usepackage{graphicx}
\usepackage{enumitem}
\usepackage[mathscr]{eucal}
\sloppy

\newcommand{\bbC}{{\mathbb{C}}}
\newcommand{\bbD}{{\mathbb{D}}}

\newcommand{\bbR}{{\mathbb{R}}}

\newcommand{\fre}{{\mathfrak{e}}}
\newcommand{\frf}{{\frak{f}}}

\newcommand{\al}{\alpha}
\newcommand{\be}{\beta}
\newcommand{\de}{\delta}

\newcommand{\calC}{{\mathcal{C}}}

\newcommand{\calH}{{\mathcal H}}

\newcommand{\calW}{{\mathcal W}}


\newcommand{\lb}{\label}
\newcommand{\f}{\frac}

\newcommand{\ol}{\overline}

\newcommand{\inte}{\text{\rm{int}}}

\newcommand{\bi}{\bibitem}
\newcommand{\hatt}{\widehat}
\newcommand{\beq}{\begin{equation}}
\newcommand{\eeq}{\end{equation}}
\newcommand{\ba}{\begin{align}}
\newcommand{\ea}{\end{align}}
\newcommand{\veps}{\varepsilon}

\newcommand{\st}{\,|\,}

\renewcommand{\Im}{\operatorname{Im}}





%
%
\newcounter{smalllist}

%
%

\newcommand{\comm}[1]{}



\allowdisplaybreaks
\numberwithin{equation}{section}

\newtheorem{theorem}{Theorem}[section]
\newtheorem{proposition}[theorem]{Proposition}
\newtheorem{lemma}[theorem]{Lemma}
\newtheorem{corollary}[theorem]{Corollary}
\theoremstyle{definition}
\newtheorem{example}[theorem]{Example}
\newtheorem{conjecture}[theorem]{Conjecture}

\newtheorem*{remarks}{Remarks}


%

%
\newcommand{\norm}[1]{\lVert#1\rVert}

\begin{document}

\title[Chebyshev Polynomials, IV]{Asymptotics of Chebyshev Polynomials,\\ IV. Comments on the Complex Case}
\author[J.~S.~Christiansen, B.~Simon and M.~Zinchenko]{Jacob S.~Christiansen$^{1,4}$, Barry Simon$^{2,5}$ \\and
Maxim~Zinchenko$^{3,6}$}

\thanks{$^1$ Centre for Mathematical Sciences, Lund University, Box 118, 22100 Lund, Sweden.
 E-mail: stordal@maths.lth.se}

\thanks{$^2$ Departments of Mathematics and Physics, Mathematics 253-37, California Institute of Technology, Pasadena, CA 91125.
E-mail: bsimon@caltech.edu}

\thanks{$^3$ Department of Mathematics and Statistics, University of New Mexico,
Albuquerque, NM 87131, USA; E-mail: maxim@math.unm.edu}

\thanks{$^4$ Research supported in part by Project Grant DFF-4181-00502 from the Danish Council for Independent Research.}

\thanks{$^5$ Research supported in part by NSF grants DMS-1265592 and DMS-1665526 and in part by Israeli BSF Grant No. 2014337.}

\thanks{$^6$ Research supported in part by Simons Foundation grant CGM-581256.}

\

\date{\today}
\keywords{Chebyshev polynomials, Lemniscates, Zero Counting Measures, Totik--Widom upper bound}
\subjclass[2010]{41A50, 30C80, 30C10}

\begin{abstract}  We make a number of comments on Chebyshev polynomials for general compact subsets of the complex plane.  We focus on two aspects: asymptotics of the zeros and explicit Totik--Widom upper bounds on their norms.
\end{abstract}

\maketitle

\section{Introduction} \lb{s1}

Let $\fre \subset \bbC$ be a compact, not finite, set.  For any continuous, complex-valued function, $f$, on $\fre$, let
\begin{equation}\label{1.1}
  \norm{f}_\fre = \sup_{z \in \fre} |f(z)|
\end{equation}
The Chebyshev polynomial, $T_n$, of $\fre$ is the (it turns out unique) degree $n$ monic polynomial that minimizes $\norm{P}_\fre$ over all degree $n$ monic polynomials, $P$.  We define
\begin{equation}\label{1.2}
  t_n \equiv \norm{T_n}_\fre
\end{equation}
We will use $T_n^{(\fre)}$ and $t_n^{(\fre)}$ when we want to be explicit about the underlying set.  We let $C(\fre)$ denote the logarithmic capacity of $\fre$ (see \cite[Section 3.6]{HA} or \cite{AG, Helms, Landkof, MF, Ransford} for the basics of potential theory; in particular, we will make reference below to the notion of equilibrium measure).  

This paper continues our study \cite{CSZ1, CSYZ2,CSZ3} of $t_n$ and $T_n$.  Those papers mainly (albeit not entirely) dealt with the case $\fre\subset\bbR$.  In this paper, we make a number of comments on the general complex case focusing on two aspects, upper bounds on $t_n$, which we called Totik--Widom bounds (henceforth, sometimes, TW bounds), and the asymptotics of zeros of $T_n(z)$.  As is often the case in complex analysis, there is magic in simple observations.  Larry Zalcman has long been a master magician in this way, so we are pleased to provide this present to him recognizing his long service as editor-in-chief of Journal d'Analyse Math\'{e}matique.

We begin by sketching the uniqueness proof for $T_n$ which extends the argument when $\fre\subset\bbR$ (a case that appears in many places including \cite{CSZ1}).  We call $z\in\fre$ an extreme point for $P$ if and only if $|P(z)|=\norm{P}_\fre$.  We claim that any norm minimizer, $P$, a monic polynomial of degree $n$, must have at least $n+1$ extreme points.  For, if there are only $z_1,\dots,z_k$ with $k\le n$ distinct extreme points for $P$, by Lagrange interpolation, we can find a polynomial $Q$ with degree $k-1$ so that
\begin{equation}\label{1.3}
  Q(z_j)=P(z_j), \quad j=1,\dots,k
\end{equation}
Then for $\varepsilon$ small and positive, it is easy to see that $\norm{P-\varepsilon Q}_\fre < \norm{P}_\fre$ violating the fact that $P$ is a norm minimizer.  (We note that for $\fre = [-1,1]$, $T_n$ has exactly $n+1$ extreme points although for many sets, e.g.\ $\fre=\bbD$, each $T_n$ has infinitely many extreme points.)

Suppose now that $f$ and $g$ are both norm minimizers among monic polynomials of degree $n$.  Then so is $h=\frac{1}{2}(f+g)$.  Pick $\{z_j\}_{j=1}^{n+1}$ distinct extreme points for $h$.  Since $|h(z_j)|=t_n$ and $|f(z_j)|, |g(z_j)|\le t_n$, we must have that $f(z_j)=g(z_j)$ for $j=1,\dots,n+1$.  Since $\deg(f-g)\le n-1$, we have that $f=g$ completing the proof of uniqueness of the minimizing polynomial.

Recall (see, e.g.\ \cite{HA}) that the outer boundary, $O\partial K$, of a compact set, $K$, is the boundary of the unbounded component of $\bbC\setminus K$.  A compact set is simply connected if and only if its boundary equals its outer boundary.  Given a compact set $K$, there is a unique compact set, $\hatt{K}$, with $\partial\hatt{K}=O\partial K$.  By the maximum principle, $\norm{f}_K=\norm{f}_{\hatt{K}}$ for any entire analytic function $f$, so $K$ and $\hatt{K}$ have the same Chebyshev polynomials.  They also have the same potential theory (i.e.\ if $C(\cdot)$ is the capacity, then $C(\hatt{K})=C(K)$; they have the same potential theory Green's function in the region where that function is positive and the same equilibrium measure).  Thus, without loss of generality, we will often state results only for simply connected sets.  Some authors use ``simply connected'' for ``connected and simply connected'' so sometimes, we will say ``not necessarily connected, simply connected'' to emphasize that we do not.

Let $w_1,\dots,w_n$ be the zeros of $T_n$ counting multiplicity and $\mu_n=\frac{1}{n}\sum_{j=1}^{n}\delta_{w_j}$ the normalized counting measure for zeros of $T_n$.  The limit points of $\{\mu_n\}_{n=1}^\infty$ as $n\to\infty$ are called density of Chebyshev zeros for $\fre$.

In \cite{ST1990}, Saff--Totik proved the following theorem:

\renewcommand\thetheorem{\Alph{theorem}}
\begin{theorem} [\cite{ST1990}] \lb{TA} Let $K$ be a connected and simply connected set so that $K^{int}$ is also connected.  Then

(a) If $K$ is an analytic Jordan region (i.e.\ $\partial K$ is an analytic simple curve), then there is a neighborhood, $N$, of $\partial K$ so that for all large $n$, $T_n^{(K)}$ has no zeros in $N$.

(b) If $\partial K$ has a neighborhood, N, and there is a sequence $n_j \to \infty$ so that $\mu_{n_j}(N) \to 0$, then $K$ is an analytic Jordan region.
\end{theorem}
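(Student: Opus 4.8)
We sketch how one might prove parts (a) and (b); both rest on the logarithmic potential theory of $K$ together with its exterior conformal map. Write $C=C(K)$, $\Omega=\overline{\bbC}\setminus K$, let $g_K$ be the Green's function of $\Omega$ with pole at $\infty$ and $\mu_K$ the equilibrium measure of $K$, and put $U^{\sigma}(z)=\int\log|z-\zeta|\,d\sigma(\zeta)$, so that $U^{\mu_K}=g_K+\log C$. Let $\Phi\colon\Omega\to\{|w|>1\}\cup\{\infty\}$ be the conformal map with $\Phi(\infty)=\infty$ and $\Phi(z)=z/C+O(1)$, and $\Psi=\Phi^{-1}$; then $g_K=\log|\Phi|$, and since $T_n$ is monic, $\tfrac1n\log|T_n|=U^{\mu_n}$.

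\emph{Part (a).} The plan is to compare $T_n$ with the monic Faber polynomials $\hatt F_n$ of $K$ (obtained from the principal part at $\infty$ of $C^n\Phi^{n}$). As $\partial K$ is an analytic Jordan curve, $\Phi$ continues conformally to $\Omega_\rho:=\{\,|\Phi|>\rho\,\}$ for some $\rho\in(0,1)$ --- a neighbourhood of $\overline{\Omega}$ whose complementary analytic curve $\gamma_\rho:=\{\,|\Phi|=\rho\,\}$ lies in $K^{int}$ --- and the classical Faber estimates, which use precisely this analyticity, give $|\hatt F_n(z)-(C\Phi(z))^n|\le A\theta^{n}$ with $\theta<C$ for $z$ in a fixed two-sided neighbourhood of $\partial K$. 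Hence $\|\hatt F_n\|_K=(1+o(1))C^n$, and, fixing $\rho_1\in(\theta/C,1)$, the error term is dominated by $|C\Phi|^n\ge(C\rho_1)^n$ on $\{\,|\Phi|\ge\rho_1\,\}$, so for large $n$ the polynomial $\hatt F_n$ has all $n$ of its zeros inside $\gamma_{\rho_1}$, away from $\partial K$. For $T_n$ one has the identity $\int_{\partial K}\log|T_n|\,d\mu_K=\sum_k g_K(w_k)+n\log C$; since the left side is $\le\log t_n$ and each $g_K(w_k)\ge 0$, this yields both $t_n\ge C^n$ and $\sum_k g_K(w_k)\le\log(t_n/C^n)$, and the Faber bound $t_n\le\|\hatt F_n\|_K=(1+o(1))C^n$ turns the latter into $\sum_k g_K(w_k)=o(1)$. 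Thus the total $g_K$-mass of the exterior zeros of $T_n$ vanishes: every weak limit $\nu$ of the $\mu_n$ is supported in $K$, and exterior zeros, if any, crowd onto $\partial K$. The crux is to promote this to the statement that for large $n$ there are \emph{no} zeros of $T_n$ in a fixed neighbourhood of $\partial K$; this is equivalent to a Widom--Faber strong asymptotic $T_n=(1+o(1))\hatt F_n$ uniform up to $\partial K$, after which non-vanishing of $\hatt F_n$ there transfers to $T_n$. One avenue is a reflection argument: replacing a near-boundary zero $z_0$ of $T_n$ by its reflection $z_0^{\ast}=\Psi(1/\overline{\Phi(z_0)})$ in $\partial K$ keeps the polynomial monic of degree $n$, and on $\partial K$ the ratio of the new polynomial to $T_n$ is $\big|\tfrac{\,w-1/\overline{\Phi(z_0)}\,}{\,w-\Phi(z_0)\,}\big|$ times a conformal-distortion factor tending to $1$ as $\Phi(z_0)\to\partial\bbD$; showing the distortion is genuinely negligible against this gain --- so that the modified monic polynomial has strictly smaller sup-norm on $K$, contradicting minimality --- uses the analyticity of $\partial K$ in an essential way, and I expect this boundary comparison to be the main obstacle in~(a).

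\emph{Part (b).} Assume $\mu_{n_j}(N)\to 0$ and pass to a subsequence with $\mu_{n_j}\to\nu$ weakly, $\nu$ a probability measure. By the Portmanteau theorem $\nu$ charges no slightly smaller neighbourhood $N'$ of $\partial K$, so $U^{\nu}$ is harmonic on $N'$. The Bernstein--Walsh inequality $|T_n|\le t_n e^{n g_K}$ gives $U^{\mu_n}\le\tfrac1n\log t_n+g_K$, and since $\tfrac1n\log t_n\to\log C$ (the Fekete--Szeg\H{o} theorem), the lower envelope theorem of potential theory gives $U^{\nu}(z)=\lim_j U^{\mu_{n_j}}(z)\le\log C+g_K(z)=U^{\mu_K}(z)$ for quasi-every $z$. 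Hence $v:=U^{\mu_K}-U^{\nu}\ge 0$ q.e.; on $\Omega$ the measure $\mu_K$ carries no mass, so $v$ is superharmonic there and therefore $\ge 0$ everywhere on $\Omega$, while $v\to 0$ at $\infty$, so the minimum principle --- with $\infty$ an interior point of $\Omega\cup\{\infty\}$ --- forces $v\equiv 0$ on $\Omega$. Consequently $\Delta v=2\pi(\mu_K-\nu)$ vanishes on $\Omega$; since $\mu_K$ is carried by $\partial K$, this gives $\supp\nu\subseteq K$, and moreover $U^{\nu}=g_K+\log C$ throughout $\Omega$. As $U^{\nu}$ is also harmonic on $N'$, the restriction $g_K|_{\Omega\cap N'}$ continues harmonically across $\partial K$, namely to $U^{\nu}|_{N'}-\log C$.

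Finally, a harmonic continuation $G$ of $g_K$ across $\partial K$ yields, through a local harmonic conjugate, a holomorphic continuation of $\log\Phi$, hence of $\Phi$, to a neighbourhood of each $z_0\in\partial K$. One checks $\nabla G\ne 0$ on $\partial K$: were $\nabla G(z_0)=0$, the level set $\{G=0\}$ would branch at $z_0$, whereas near $z_0$ it contains $\partial K$ with the connected set $\Omega$ lying in $\{G>0\}$ on one side --- incompatible with $\partial K$ being the locally separating boundary of the simply connected $K$ with connected interior. Hence $\Phi$ is conformal across $\partial K$, so $\partial K=\Phi^{-1}(\partial\bbD)$ is locally the conformal image of the analytic curve $\partial\bbD$; patching these arcs, $\partial K$ is an analytic Jordan curve and $K$ an analytic Jordan region. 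In part (b) the delicate points are this last regularity step --- the non-vanishing of the gradient of the continued Green's function on $\partial K$ --- and, earlier, the passage from the quasi-everywhere inequality to the identity $v\equiv 0$.
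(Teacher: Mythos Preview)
The paper does not give its own proof of Theorem~A: it is quoted as a result of Saff--Totik \cite{ST1990}. What the paper \emph{does} prove is the local analogue, Theorem~1.1, and it explicitly says that argument is ``modelled after arguments in \cite{ST1990}.'' So the natural comparison is between your sketch and the paper's proof of Theorem~1.1, which is essentially the Saff--Totik argument specialized (and then re-generalized).

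For part~(b) your route and the paper's are the same in outline: pass to a weak limit $\nu$ of the zero counting measures that puts no mass near $\partial K$, show that $\int\log|z-w|\,d\nu(w)$ agrees with $\int\log|z-w|\,d\mu_K(w)$ on the unbounded component of $\bbC\setminus K$, deduce that $g_K$ extends harmonically across $\partial K$, and read off analyticity of $\partial K$ from the local structure of the zero set of a harmonic function. The differences are in the execution of two steps. First, for the identity of potentials on $\Omega$ the paper uses Theorem~D (the pointwise limit $|T_n(z)|^{1/n}\to C(K)e^{g_K(z)}$ outside $\cvh(K)$) together with Widom's bound on the number of exterior zeros, which gives the equality directly via harmonicity and the identity principle; you instead go through Bernstein--Walsh, the lower envelope theorem, and a minimum principle argument for $v=U^{\mu_K}-U^{\nu}$. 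Your path works but is more circuitous, and the ``q.e.\ inequality $\Rightarrow$ everywhere identity'' step you flag as delicate is exactly what the paper bypasses. Second, for the analyticity conclusion the paper invokes its Proposition~2.2 (the local normal form $u=\Im g^p$ for a harmonic function vanishing at a point), using connectedness of $K^{\inte}$ to force $p=1$; your version via a harmonic conjugate, holomorphic extension of $\Phi$, and non-vanishing of $\nabla G$ on $\partial K$ is equivalent---your $\nabla G(z_0)\ne 0$ is precisely the paper's $p=1$.

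For part~(a) there is a genuine gap, which you yourself identify. The Faber-polynomial comparison gives $t_n/C^n\to 1$ and $\sum_k g_K(w_k)\to 0$, but that only forces exterior zeros to \emph{approach} $\partial K$; it does not by itself exclude zeros in a fixed neighbourhood of $\partial K$. Your proposed reflection argument---replace a near-boundary zero $z_0$ by its Schwarz reflection $z_0^{\ast}$ and show the sup-norm drops---is the right instinct, but the step where the conformal-distortion factor is shown to be dominated by the M\"obius gain $\bigl|\tfrac{w-1/\overline{\Phi(z_0)}}{w-\Phi(z_0)}\bigr|$ is not carried out, and this is exactly where the argument lives. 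Since the paper does not treat~(a) at all, there is nothing to compare here; but as written your~(a) is a plausible outline rather than a proof.
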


In \cite{BSS1988}, Blatt--Saff--Simkani proved the following theorem

\begin{theorem} [\cite{BSS1988}] \lb{TB}  Let $K$ be a compact, simply connected subset of $\bbC$ whose interior is empty and with $C(K)>0$.  Then, as $n \to \infty$, the Chebyshev zero counting measure, $\mu_n$, converges to $\mu_K$, the equilibrium measure for $K$.
\end{theorem}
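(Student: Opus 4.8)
The plan is to prove that every weak-$*$ limit point of $\{\mu_n\}$ equals $\mu_K$; since an elementary projection argument (Fej\'er's theorem) shows every zero of $T_n$ lies in $\mathrm{conv}(K)$ — replacing a zero $w_0\notin\mathrm{conv}(K)$ by its nearest point $w_1\in\mathrm{conv}(K)$ strictly decreases $|z-w_0|$ to $|z-w_1|$ at \emph{every} $z\in K$, contradicting minimality — the family $\{\mu_n\}$ is weak-$*$ precompact, so this suffices. Write $U^\nu(z)=\int\log\frac{1}{|z-w|}\,d\nu(w)$ for the logarithmic potential, $\gamma=\log(1/C(K))$ for the Robin constant, and $\Omega=\bbC\setminus K$; since $K$ is simply connected, $\Omega$ is connected, and since $K$ has empty interior, $\Omega$ is dense in $\bbC$. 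I will use freely: $t_n^{1/n}\to C(K)$ and $t_n\ge C(K)^n$ (Fekete--Szeg\H{o}); the Bernstein--Walsh inequality $|T_n(z)|\le t_n\exp\!\big(n\,g_\Omega(z,\infty)\big)$ on $\Omega$; Frostman's theorem, namely $U^{\mu_K}\le\gamma$ on $\bbC$ with equality q.e.\ on $K$, $U^{\mu_K}=\gamma-g_\Omega(\cdot,\infty)$ on $\Omega$, and $I(\mu_K):=\int U^{\mu_K}\,d\mu_K=\gamma$; the principle of descent and upper envelope theorem for potentials of weak-$*$ convergent measures; and the fact that polar sets are Lebesgue null (see \cite{Landkof,Ransford}).

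First, fix a subsequence with $\mu_{n_j}\to\mu$ weak-$*$. Since $-U^{\mu_n}(z)=\tfrac1n\log|T_n(z)|$, the definition of $t_n$ gives $U^{\mu_n}(z)\ge-\tfrac1n\log t_n$ on $K$, while Bernstein--Walsh gives $U^{\mu_n}(z)\ge-\tfrac1n\log t_n-g_\Omega(z,\infty)$ on $\Omega$. Combining the principle of descent ($U^\mu\le\liminf_j U^{\mu_{n_j}}$ everywhere) with the upper envelope theorem ($\limsup_j U^{\mu_{n_j}}\le U^\mu$ q.e.) yields $U^\mu(z)=\lim_j U^{\mu_{n_j}}(z)$ for q.e.\ $z$; passing to the limit in the two bounds and using $-\tfrac1n\log t_n\to\gamma$ gives
\[
  U^\mu(z)\ge\gamma\ \text{for q.e.\ }z\in K,\qquad U^\mu(z)\ge\gamma-g_\Omega(z,\infty)=U^{\mu_K}(z)\ \text{for q.e.\ }z\in\Omega,
\]
so $U^\mu\ge U^{\mu_K}$ quasi-everywhere on $\bbC$.

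Next I would upgrade this to equality. Since $\mu_K$ charges no polar set, $U^\mu\ge\gamma$ holds $\mu_K$-a.e., so $\int U^\mu\,d\mu_K\ge\gamma$; by Tonelli $\int U^\mu\,d\mu_K=\int U^{\mu_K}\,d\mu$, and $U^{\mu_K}\le\gamma$ forces $\int U^{\mu_K}\,d\mu\le\gamma$, so all these equal $\gamma=I(\mu_K)$. Hence $\int\big(\gamma-U^{\mu_K}\big)\,d\mu=0$ with $\gamma-U^{\mu_K}=g_\Omega(\cdot,\infty)>0$ on $\Omega$, forcing $\mu(\Omega)=0$, i.e.\ $\mathrm{supp}\,\mu\subseteq K$. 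Then $h:=U^\mu-U^{\mu_K}$ is harmonic on $\Omega$ (both potentials being harmonic off $K$), is $\ge0$ there (it is $\ge0$ q.e.\ and continuous on $\Omega$, and open sets are non-polar), and extends harmonically to $\infty$ with $h(\infty)=0$; by the minimum principle on the connected set $\Omega\cup\{\infty\}$ this interior minimum forces $h\equiv0$, i.e.\ $U^\mu=U^{\mu_K}$ on $\Omega$. Finally, for q.e.\ $\zeta\in K$ (every regular boundary point) lower semicontinuity of $U^\mu$ together with $g_\Omega(z,\infty)\to0$ as $\Omega\ni z\to\zeta$ gives $U^\mu(\zeta)\le\liminf_{\Omega\ni z\to\zeta}U^{\mu_K}(z)=\gamma$, and with $U^\mu\ge\gamma$ q.e.\ on $K$ we get $U^\mu=\gamma=U^{\mu_K}$ q.e.\ on $K$. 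Thus $U^\mu=U^{\mu_K}$ quasi-everywhere, hence Lebesgue-a.e., hence as distributions; applying $-\tfrac1{2\pi}\Delta$ gives $\mu=\mu_K$.

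The main obstacle is this last step — ruling out that limiting mass ``hides'' away from $K$: for $K=\overline{\bbD}$ one has $T_n(z)=z^n$ and $\mu_n=\delta_0\not\to\mu_K$, so some hypothesis beyond $C(K)>0$ is indispensable, and one must see exactly how the two structural assumptions are used. They enter precisely here: simple connectedness makes $\Omega$ connected, so the minimum principle pins $h$ down on all of $\Omega$, and the empty interior makes $\Omega$ dense, so that the identity $U^\mu=U^{\mu_K}$ propagates from $\Omega$ onto $K$. A secondary nuisance, to be handled with care throughout, is keeping track of the polar exceptional sets and repeatedly invoking that $\mu_K$ does not charge them.
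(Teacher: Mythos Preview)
The paper does not prove this theorem; it is quoted from \cite{BSS1988} as background. The paper's own contribution in this direction is the local variant Theorem~\ref{T1.2}, proved under the \emph{stronger} hypothesis that $N\cap\fre$ has two-dimensional Lebesgue measure zero rather than merely empty interior, and the authors explicitly remark after that proof that their method cannot reach the empty-interior case (citing Totik's example of two distinct measures with identical potentials off a nowhere-dense set).

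Your argument is correct and is genuinely stronger than the paper's own method. The paper's route (for Theorem~\ref{T1.2}) is: the potentials of any limit $\mu_\infty$ and of $\mu_\fre$ agree on $\Omega$ by Theorem~\ref{T2.1}; since $\fre$ has planar measure zero they agree as $L^1$ functions, hence as distributions; take $\Delta$. Your route instead exploits that empty interior makes $\Omega$ dense: after obtaining $U^\mu=U^{\mu_K}$ on the connected set $\Omega$ via the minimum principle, you use lower semicontinuity of $U^\mu$ together with $g_\Omega(z,\infty)\to 0$ at regular boundary points to propagate the equality onto $K$ itself, and only then take the Laplacian. That boundary step is exactly what bridges ``measure zero'' to ``empty interior'' and is precisely what the paper's argument lacks; it buys you the full Blatt--Saff--Simkani statement, whereas the paper's approach buys simplicity and localizability at the cost of the stronger hypothesis.

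One small correction: the statement you call the ``upper envelope theorem ($\limsup_j U^{\mu_{n_j}}\le U^\mu$ q.e.)'' is neither standard nor needed. The Lower Envelope Theorem (e.g.\ \cite[Theorem~I.6.9]{SaT}) already gives $U^\mu=\liminf_j U^{\mu_{n_j}}$ q.e., and since $U^{\mu_{n_j}}\ge -\tfrac{1}{n_j}\log t_{n_j}$ pointwise on $K$, this alone yields $U^\mu\ge\gamma$ q.e.\ on $K$. Your conclusion is unaffected.
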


In Section \ref{s2}, we will explore local versions of these theorems and prove

\renewcommand\thetheorem{\arabic{theorem}}
\numberwithin{theorem}{section}
\setcounter{theorem}{0}

\begin{theorem} \lb{T1.1} Let $\fre$ be a simply connected (but not necessarily connected), compact subset of $\bbC$, regular for potential theory, and $N$ an open, connected, simply connected set so that $N\cap\partial\fre$ is a continuous arc that divides $N$ into two pieces, $\fre^{int}\cap N$ and $(\bbC\setminus\fre)\cap N$.  Let $M_n(N)$ be the number of zeros of $T_n^{(\fre)}$ in $N$.  Then either $\liminf M_n(N)/n > 0$ or $N\cap\partial\fre$ is an analytic arc.
\end{theorem}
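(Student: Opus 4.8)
The plan is to prove the implication: if $\liminf_n M_n(N)/n=0$, then $\gamma:=N\cap\partial\fre$ is a real-analytic arc. The idea is to manufacture a harmonic function on $N$ whose zero set is exactly $\gamma$ and whose gradient vanishes nowhere on $\gamma$; a connected piece of the zero set of such a function is automatically a real-analytic arc.

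First I would pass to a subsequence $n_j$ along which $M_{n_j}(N)/n_j\to0$ and also $\mu_{n_j}\to\mu$ weak-$*$ (possible since the zeros of every $T_n$ lie in the convex hull of $\fre$, so the $\mu_n$ live on a fixed compact set). Write $U^\nu(z)=\int\log\frac1{|z-w|}\,d\nu(w)$, $u_n:=\tfrac1n\log|T_n|=-U^{\mu_n}$, and $u:=-U^\mu$. The $u_{n_j}$ are subharmonic, and by $|T_{n_j}|\le t_{n_j}$ on $\fre$ together with Bernstein--Walsh, $|T_{n_j}(z)|\le t_{n_j}e^{n_jg_\fre(z)}$ (with $g_\fre$ the Green's function of $\bbC\setminus\fre$ with pole at $\infty$, continuous since $\fre$ is regular), they are uniformly bounded above on compact subsets of $N$; hence $u_{n_j}\to u$ in $L^1_{\mathrm{loc}}$, $u$ is subharmonic, and $u=\limsup_j u_{n_j}$ quasi-everywhere. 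Since $\mu_{n_j}(N)\to0$ forces $\mu(N)=0$, the potential $u$ is harmonic on $N$; letting $j\to\infty$ in $u_{n_j}\le\tfrac1{n_j}\log t_{n_j}\to\log C(\fre)$ on $\fre$ and in Bernstein--Walsh gives $u\le\log C(\fre)$ q.e.\ on $\fre$ and $u\le g_\fre+\log C(\fre)$ on $\bbC\setminus\fre$.

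Next I would identify $u$ on $\gamma$. Fubini and Frostman's theorem (for regular $\fre$, $-U^{\mu_\fre}=g_\fre+\log C(\fre)$ everywhere, $g_\fre\equiv0$ on $\fre$) give $\int u_{n_j}\,d\mu_\fre=\log C(\fre)+\int g_\fre\,d\mu_{n_j}\to\log C(\fre)+\int g_\fre\,d\mu\ge\log C(\fre)$, while Fatou applied to $\tfrac1{n_j}\log t_{n_j}-u_{n_j}\ge0$ on $\supp\mu_\fre\subseteq\fre$, together with $u\le\log C(\fre)$ $\mu_\fre$-a.e., forces $\int g_\fre\,d\mu=0$ and $\int u\,d\mu_\fre=\log C(\fre)$. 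Hence $\supp\mu\subseteq\fre$ and $u=\log C(\fre)$ $\mu_\fre$-a.e.; since every nonempty relatively open subset of $\gamma\subseteq\partial\fre$ carries positive $\mu_\fre$-mass and $u$ is continuous on $N$, we get $u\equiv\log C(\fre)$ on $\gamma$. Put $v:=u-\log C(\fre)$, harmonic on $N$: then $v=0$ on $\gamma$, $v\le0$ on $\fre^{\mathrm{int}}\cap N$, and $v\le g_\fre$ on $(\bbC\setminus\fre)\cap N$. Also $v\not\equiv0$: if $v(z_0)=0$ at some $z_0\in\fre^{\mathrm{int}}\cap N$, the maximum principle and the identity theorem give $v\equiv0$ on $N$, hence $u\equiv\log C(\fre)$ on the nonempty open set $(\bbC\setminus\fre)\cap N$, hence (as $u$ is harmonic on the component of $\bbC\setminus\supp\mu$ containing $\bbC\setminus\fre$) near $\infty$, contradicting $u(z)=\log|z|+O(1)$. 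So $v<0$ on $\fre^{\mathrm{int}}\cap N$, and $\gamma$ lies in the zero set of the non-constant harmonic function $v$; off the discrete critical set of $v$ this zero set is a real-analytic arc which, being met by $\gamma$ in a crosscut of each small disk, coincides with $\gamma$ there. Thus $\gamma$ is real-analytic except possibly at isolated points where $\nabla v=0$.

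It remains to rule out those points, and this is where I expect the real work. If $p\in\gamma$ is such a point, then near $p$ the set $\{v=0\}$ consists of $2k$ analytic arcs, $k\ge2$, of which $\gamma$ uses exactly two. If those two are collinear at $p$, then $\fre^{\mathrm{int}}\cap N$ is near $p$ a half-disk subdivided by $k-1$ of the remaining arcs into $k$ sectors on which $v$ alternates in sign---impossible, as $v<0$ throughout $\fre^{\mathrm{int}}\cap N$. Hence $\gamma$ would have a genuine corner at $p$, of interior opening $\pi/k$, and the local expansion of the (integer-order) zero of $v$ at $p$ forces $v$ to take negative values on the exterior face $(\bbC\setminus\fre)\cap N$ in every neighborhood of $p$. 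To contradict this I need the complementary one-sided estimate $v\ge0$ on $(\bbC\setminus\fre)\cap N$ near $\gamma$---equivalently, that the limiting potential $u$ does not fall below $\log C(\fre)$ just outside $\fre$ near $\gamma$. At the (dense) smooth points of $\gamma$ this holds by Hopf's boundary-point lemma applied to the harmonic function $v<0$ on $\fre^{\mathrm{int}}\cap N$ (strictly positive outward normal derivative, hence $v>0$ immediately outside); the hard part will be promoting this to a full one-sided neighborhood of $\gamma$, presumably by combining $v\le g_\fre$ with a sharper finite-$n$ minimum-principle argument for $g_\fre+\log C(\fre)-\tfrac1{n_j}\log|T_{n_j}|$ on $(\bbC\setminus\fre)\cap N$ after removing the $o(n_j)$ zeros of $T_{n_j}$ lying in $N$. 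Granting it, corners are excluded, $\nabla v$ is nowhere zero on $\gamma$, and $\gamma=N\cap\partial\fre$ is a real-analytic arc.
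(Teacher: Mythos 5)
Your strategy is the same as the paper's: extract a subsequence along which the zero counting measures converge weakly with no mass in $N$, show the limiting potential is harmonic in $N$ and vanishes on $\gamma=N\cap\partial\fre$, and then invoke the local structure theorem for zero sets of harmonic functions. But there is a genuine gap exactly where you flag one. You only establish the one-sided bound $v\le G_\fre$ on $(\bbC\setminus\fre)\cap N$, so you cannot conclude that $\{v=0\}\cap N$ is exactly $\gamma$: extra zero arcs of $v$ could a priori run into the exterior side. That is what forces your entire final paragraph on corner points, which you do not complete (``Granting it\dots''). The patch sketched there is also doubtful on its own terms: Hopf's boundary point lemma at the ``smooth points'' gives positivity of $v$ only along normal segments at those points, which does not fill out a one-sided neighborhood of a putative corner, and the ``sharper finite-$n$ minimum-principle argument'' is not formulated.

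What is missing is the \emph{equality} $v=G_\fre$ on $\bbC\setminus\fre$ (the balayage identity, Theorem 2.1 of the paper), and you already have every ingredient for it: you proved $\supp\mu\subseteq\fre$, so $v$ and $G_\fre$ are both harmonic on the connected open set $\bbC\setminus\fre$; by Fej\'er's theorem (zeros in the convex hull), Bernstein--Walsh, and the Faber--Fekete--Szeg\H{o} theorem $t_n^{1/n}\to C(\fre)$ --- all of which you invoke --- the nonnegative harmonic function $G_\fre-v$ tends to $0$ at $\infty$ outside the convex hull of $\fre$, hence vanishes there by Harnack and then on all of $\bbC\setminus\fre$ by the identity principle. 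With $v=G_\fre>0$ on $(\bbC\setminus\fre)\cap N$ and $v<0$ on $\fre^{int}\cap N$, the zero set of $v$ in $N$ is precisely $\gamma$, and the local structure theorem (your $2k$ arcs with alternating signs) forces $k=1$ at every point of $\gamma$, since otherwise $\gamma$ would not be an arc dividing $N$ into two pieces; analyticity follows and no corner analysis is needed. As written, though, the proof is incomplete at its decisive step.
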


In particular, if $K$ is a Jordan region whose boundary curve is nowhere analytic, all of the boundary points are points of density of zeros of $T_n$.  We believe, but cannot prove, that in this case the density of zeros measure exists and is the equilibrium measure.  Moreover, this theorem implies that if $\partial\fre$ is piecewise analytic but not analytic at some corner points, then at least these corner points are points of density for the zeros.  In Section 2, we will also discuss zeros near crossing points of a boundary as occur for example with a figure eight.

\begin{theorem} \lb{T1.2} Let $\fre$ be a compact, simply connected subset of $\bbC$ and $N$ an open, connected, simply connected set so that $C(N\cap\fre)>0$ and so that $N\cap\fre$ has two-dimensional Lebesgue measure zero.  Then as $n \to\infty$, the density of zeros measure for $T_n^{(\fre)}$ restricted to $N$ converges weakly to the equilibrium measure, $\mu_\fre$, restricted to $N$.
\end{theorem}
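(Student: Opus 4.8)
The plan is to reduce the statement to Theorem \ref{TB} via a localization argument. The key structural fact is that Theorem \ref{TB} is really a statement about the asymptotic distribution of zeros of \emph{near-minimal} polynomials on a set with empty interior, and we want to extract local information from a global Chebyshev problem. First I would set $S = N \cap \fre$ and observe that, by hypothesis, $S$ has positive capacity and zero area. The natural comparison object is the Chebyshev problem on the set $\fre' = (\fre \setminus N) \cup \overline{S}$, or more simply on a compact piece of $\fre$ contained in $N$; one must be careful here because restricting to $N \cap \fre$ changes the Chebyshev problem, and the point is to show that it does not change it \emph{much} in a way that affects the limiting zero distribution inside $N$.

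The main tool I would use is the following: by potential-theoretic estimates (the Bernstein--Walsh lemma and the fact that $t_n^{(\fre)} \le C(\fre)^n \cdot (\text{Totik--Widom factor})$, which is at worst subexponential, and in any case $\limsup (t_n^{(\fre)})^{1/n} = C(\fre)$), one has $\frac{1}{n}\log|T_n^{(\fre)}(z)| \to -g_\fre(z) + \log C(\fre)$ locally uniformly off $\fre$ and off the (asymptotic) zero set, where $g_\fre$ is the Green's function. So the real work is entirely \emph{on} $\fre$, specifically on $S$. The strategy is: (1) show $\frac1n \log |T_n^{(\fre)}|$ is bounded above by $\log C(\fre)$ plus $o(n)$ on all of $\fre$, hence on $S$; (2) show that if along a subsequence the zero counting measures $\mu_{n_j}$ restricted to $N$ converged to some $\nu \ne \mu_\fre|_N$, then by taking potentials we would produce a polynomial-like object that is too small on part of $S$ and too large elsewhere, contradicting minimality. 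The mechanism mirrors the BSS argument: if zeros were deficient in a subregion $N' \subset\subset N$ with $C(N' \cap \fre) > 0$, then $|T_n^{(\fre)}|$ would be comparable to $C(\fre)^n$ on $N' \cap \fre$ on a set of positive capacity, while the minimality forces, via a modification/interpolation argument using the zero-area hypothesis, that $T_n$ could be perturbed to decrease its sup norm.

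More concretely, the cleanest route is probably: let $\nu$ be any weak-$*$ limit of a subsequence of $\mu_n$ (these exist since all $\mu_n$ are probability measures supported on $\hatt\fre$, a fixed compact set). The total measure $\nu(\bbC)=1$ but $\nu$ could have mass outside $N$. I would show (a) $\supp \nu \subseteq \fre$ using the upper bound $\limsup (t_n)^{1/n} \le C(\fre)$ together with the minimum principle applied to $\frac1n\log|T_n| - \log C(\fre) + g_\fre$; this is standard and shows no zeros escape to the unbounded component in the limit, and in fact $\nu$ cannot charge $\fre^{int}$ outside... but here $\fre^{int}$ may be nonempty globally, so $\nu|_N$ is what we control. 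Then (b), the heart: for the balayage/equilibrium identification, use that on $S = N\cap\fre$, which has zero area, the logarithmic potential $U^\nu$ must equal $U^{\mu_\fre}$ quasi-everywhere on $S$ — this follows by comparing $U^{\mu_n}(z) = -\frac1n\log|T_n(z)|$ (exactly, since $T_n$ is monic) with the upper envelope $\log C(\fre)^{-1} \cdot$ (const) and a lower bound coming from the fact that $\|T_n\|_S \ge \|T_n\|_\fre \cdot (\text{something})$ is \emph{false} in general, so instead one argues that near-equality $U^{\mu_n} \approx U^{\mu_\fre}$ must propagate onto $S$ because $S$ being polar-free and area-zero leaves no room for $T_n$ to be both small on $\fre\setminus N$ and have the wrong potential on $S$.

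\emph{Where the difficulty lies.} The main obstacle is step (b): transferring the \emph{global} near-minimality of $T_n^{(\fre)}$ into a \emph{local} statement about the zero distribution on $S$, when $\fre$ may have substantial interior outside $N$. Inside $\fre^{int} \setminus N$ the Chebyshev polynomial can and does have zeros, and those zeros contribute to $U^{\mu_n}$ everywhere, including on $S$; so one cannot simply ignore them. The resolution I would pursue is to use the zero-area hypothesis on $N\cap\fre$ decisively: since $N \cap \fre$ has measure zero, $T_n$ restricted to a neighborhood behaves like a polynomial small on a ``thin'' set, and the BSS technique (perturb $T_n$ by a small multiple of an interpolating polynomial vanishing appropriately, using that the extreme points must accumulate on $S$ in the right density or else a perturbation lowers the norm) applies locally. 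Making the localization of the BSS perturbation argument rigorous — ensuring the perturbing polynomial is controlled on all of $\fre$, not just on $N$ — is the crux, and I would handle it by a smooth partition-of-unity-free argument using a single well-chosen auxiliary polynomial whose zeros are placed just outside $N$ to kill its contribution on $\fre \setminus N$ up to subexponential factors.
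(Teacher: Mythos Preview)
Your approach is far more complicated than necessary, and as written it does not arrive at a proof. The paper's argument is a three-line application of distribution theory, and the measure-zero hypothesis is used in a completely different (and much simpler) way than you envision.

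Here is the point you are missing. By Theorem~\ref{T2.1} (which follows from Theorems~\ref{TD} and~\ref{TE} and requires no minimality perturbation at all), any weak limit $\mu_\infty$ of the $\mu_n$ satisfies $\Phi_{\mu_\infty}(z)=\Phi_{\mu_\fre}(z)$ for every $z$ in the unbounded component of $\bbC\setminus\fre$. Since $\fre$ is simply connected, $N\setminus\fre$ lies entirely in that unbounded component, so the two logarithmic potentials agree on $N\setminus\fre$. Now comes the one genuine use of the zero-area hypothesis: potentials of compactly supported measures are locally $L^1$, and two $L^1$ functions that agree off a set of Lebesgue measure zero are equal as distributions. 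Hence $\Phi_{\mu_\infty}=\Phi_{\mu_\fre}$ as distributions on all of $N$, and applying the distributional Laplacian ($\Delta\Phi_\kappa=2\pi\kappa$) gives $\mu_\infty\!\restriction\! N=\mu_\fre\!\restriction\! N$. Since every subsequential limit agrees on $N$, the restrictions $\mu_n\!\restriction\! N$ converge.

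Your plan, by contrast, tries to establish $U^\nu=U^{\mu_\fre}$ \emph{on} $S=N\cap\fre$ by a localized BSS-style perturbation of $T_n$. This is backwards: the behavior of the potential on $S$ is exactly what the measure-zero hypothesis lets you \emph{ignore}, not what you need to control. Your step~(b) never closes --- you yourself identify the difficulty of controlling an auxiliary polynomial on $\fre\setminus N$ --- and in fact no such construction is needed. The paper explicitly notes that the BSS argument is global and does not localize, which is precisely why the distributional route (requiring the stronger zero-area hypothesis rather than merely empty interior) is taken instead.
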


The example to think of is $\fre=\overline{\bbD}\cup [1,2]$ and $N$ a small disk about some point $x_0\in (1,2)$.  We suspect that the measure zero condition can be replaced by the condition that the set has empty interior; we will discuss this further in Section \ref{s2}.

The last three sections deal with TW upper bounds on $t_n^{(\fre)}$.  One knows that $t_n^{1/n}\to C(\fre)$ (see, for example the discussion in \cite[Theorem 4.3.10]{OT}) and that $C(\fre)^n\le t_n$ (the Szeg\H{o} lower bound).  In \cite{CSZ1,CSYZ2}, a key in the analysis of the pointwise asymptotics of $T_n(z)$ were upper bounds on $t_n$ of the form
\begin{equation}\label{1.4}
  t_n \le QC(\fre)^n
\end{equation}
which we dubbed Totik--Widom bounds after Widom \cite{Widom} and Totik \cite{Totik09} who proved it for finite gap subsets of $\bbR$ (and Widom also for finite unions of disjoint $C^{2+\varepsilon}$ Jordan regions).

Our work in \cite{CSZ1,CSYZ2} relied on Parreau--Widom sets (after \cite{Parreau, Widom71}).  For any compact set $\fre\subset\bbC$, let $\calC$ be the critical points of $G_\fre$, the Green's function for $\fre$, on the unbounded component of $\bbC\setminus\fre$.  Define
\begin{equation} \lb{1.5}
PW(\fre) \equiv \sum_{w \in \calC} G_\fre(w)
\end{equation}
The Parreau--Widom sets are those with $PW(\fre)<\infty$.  In \cite{CSZ1}, we proved for sets regular for potential theory that 
\begin{equation}\label{1.6}
  \fre\subset\bbR \; \Rightarrow \; t_n^{(\fre)} \le 2 \exp\bigl\{PW(\fre)\bigr\}C(\fre)^n
\end{equation}
and in \cite{CSYZ2}, we proved that if $\fre\subset\bbR$ has the property that for all decomposition $\fre = \fre_1 \cup \dots \cup \fre_\ell$ into closed disjoint sets, one has that $\mu_\fre(\fre_1),\dots,\mu_\fre(\fre_\ell)$ are rationally independent, then \eqref{1.4} for $\fre$ implies that $PW(\fre)<\infty$.  In \cite{CSZ3}, we proved that for such sets with $PW(\fre)<\infty$, one has that the set of limit points of $\calW_n(\fre) \equiv \norm{T_n}_\fre/C(\fre)^n$ (called Widom factors) is the entire closed interval $[2,2\exp\{PW(\fre)\}]$.  Sections \ref{s3}--\ref{s5} explore the question of when a bound like \eqref{1.4} holds for some $\fre\subset\bbC$.  In \cite{CSZ1}, we raised the question of whether $\calW_n(\fre)$ is bounded for every PW set in $\bbC$ and we will discuss this further in section \ref{s5}.

Sections \ref{s3} and \ref{s4} discuss two cases where we can prove TW bounds with explicit constants (for many but not all of these sets, Widom has TW bounds but without explicit constants.  Basically, the sets which are not handled in \cite{Widom} include certain unions of mutually external analytic Jordan curves but some of which can touch at single points).

Section \ref{s3} discusses solid lemniscates, that is, sets of the form
\begin{equation}\label{1.7}
  \frf_n = \{z\in\bbC \st |P_n(z)| \le \alpha\}
\end{equation}
for a polynomial $P_n$ of exact degree $n$.  In \cite{CSZ3} (see also Faber \cite{Faber}), we implicitly noted that if $P_n$ is monic, then
\begin{equation}\label{1.8}
  T_{kn}^{(\frf_n)}(z) = P_n(z)^k; \qquad k=1,2,\dots
\end{equation}
and we use this in Section \ref{s3} to prove that

\begin{theorem} \label{T1.3} Let $\frf_n$ be of the form \eqref{1.7}.  Define
\begin{equation}\label{1.9}
  Q = \max_{j=0,\dots,n-1} \calW_j(\frf_n)
\end{equation}
Then, \eqref{1.4} holds for $\frf_n$.
\end{theorem}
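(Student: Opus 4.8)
The plan is to use, as competitors in the degree-$m$ Chebyshev problem for $\frf_n$, the monic polynomials $P_n^k\,T_j^{(\frf_n)}$: this reduces everything to the exactly solvable indices $kn$ of \eqref{1.8} together with a remainder of bounded degree.

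First I would reduce to the case that $P_n$ is monic. Writing $P_n(z) = c z^n + \cdots$ with $c \neq 0$, we have $\frf_n = \{z \in \bbC : |c^{-1}P_n(z)| \le \alpha/|c|\}$, so replacing $P_n$ by $c^{-1}P_n$ and $\alpha$ by $\alpha/|c|$ leaves the set $\frf_n$, and hence each of $t_m^{(\frf_n)}$, $C(\frf_n)$, $\calW_j(\frf_n)$ and $Q$, unchanged; so I may assume $P_n$ monic. By \eqref{1.8}, $t_{kn}^{(\frf_n)} = \norm{P_n^k}_{\frf_n} = \norm{P_n}_{\frf_n}^k$ for all $k \ge 1$, whence
\begin{equation*}
  C(\frf_n) = \lim_{k\to\infty}\bigl(t_{kn}^{(\frf_n)}\bigr)^{1/(kn)} = \norm{P_n}_{\frf_n}^{1/n},
\end{equation*}
that is, $\norm{P_n}_{\frf_n} = C(\frf_n)^n$. (Since $|P_n| \le \alpha$ on $\frf_n$ with equality on the nonempty level set $\{|P_n| = \alpha\} \subset \frf_n$, one has $\norm{P_n}_{\frf_n} = \alpha$, recovering the familiar value $C(\frf_n) = \alpha^{1/n}$ for the capacity of a monic lemniscate.)

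Then, for a given $m \ge 1$, I would write $m = kn + j$ with integers $k \ge 0$ and $0 \le j \le n - 1$, and observe that $P_n^k\,T_j^{(\frf_n)}$ is monic of degree $m$. By the minimality defining $T_m^{(\frf_n)}$ and submultiplicativity of $\norm{\cdot}_{\frf_n}$,
\begin{equation*}
  t_m^{(\frf_n)} \le \norm{P_n^k\,T_j^{(\frf_n)}}_{\frf_n} \le \norm{P_n}_{\frf_n}^{\,k}\,\norm{T_j^{(\frf_n)}}_{\frf_n} = C(\frf_n)^{kn}\,t_j^{(\frf_n)} = C(\frf_n)^{kn}\,\calW_j(\frf_n)\,C(\frf_n)^{j} = \calW_j(\frf_n)\,C(\frf_n)^{m}.
\end{equation*}
Since $0 \le j \le n-1$, we have $\calW_j(\frf_n) \le Q$ (and $\calW_0(\frf_n) = 1$, so $Q \ge 1$), and therefore $t_m^{(\frf_n)} \le Q\,C(\frf_n)^m$ for every $m$, which is precisely \eqref{1.4}.

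I do not expect a genuine obstacle here: the content is entirely in the choice of competitor $P_n^k\,T_j^{(\frf_n)}$, which peels off the ``resonant'' degree $kn$ — handled exactly by \eqref{1.8} — and leaves a remainder of degree $j < n$ whose Widom factor is by definition at most $Q$. The only points needing a line of justification are the identity $\norm{P_n}_{\frf_n} = C(\frf_n)^n$ (equivalently $C(\frf_n) = \alpha^{1/n}$) and the elementary submultiplicativity $\norm{fg}_{\frf_n} \le \norm{f}_{\frf_n}\,\norm{g}_{\frf_n}$ of the sup norm.
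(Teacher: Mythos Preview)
Your proof is correct and is essentially the same as the paper's: the paper proves the general submultiplicativity $t_{j+\ell}\le t_j t_\ell$ by using the competitor $T_j T_\ell$, and then applies it with $\ell=kn$ together with $\calW_{kn}(\frf_n)=1$, which, since $T_{kn}^{(\frf_n)}=P_n^k$ by \eqref{1.8}, is exactly your competitor $P_n^k T_j^{(\frf_n)}$. Your extra lines reducing to monic $P_n$ and identifying $\norm{P_n}_{\frf_n}=C(\frf_n)^n$ just reprove what the paper records in \eqref{3.2}--\eqref{3.4}.
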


Our discussion in Section \ref{s4} is motivated by an old result of Faber \cite{Faber} (he stated it for $[-1,1]$; we use $[-2,2]$ to minimize factors of $2$.  The results are equivalent).

\setcounter{theorem}{2}
\renewcommand\thetheorem{\Alph{theorem}}

\begin{theorem} [\cite{Faber}] \label{TC} Let $\fre$ be an ellipse with foci at $\pm 2$.  Then $T_n^{([-2,2])}$ (which are scaled multiples of the classical Chebyshev polynomials of the first kind) are the Chebyshev polynomials for $\fre$.
\end{theorem}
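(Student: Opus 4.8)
The plan is to reduce to a canonical family of ellipses and then run a complex analogue of the classical equioscillation argument. Every ellipse with foci $\pm 2$ is, for a unique $\rho>1$, the curve $\fre_\rho=\{\,w+w^{-1}:|w|=\rho\,\}$ (its semi-axes are $\rho+\rho^{-1}$ and $\rho-\rho^{-1}$, and $\rho$ is fixed by $\rho+\rho^{-1}=\text{the major semi-axis}$); since $\norm{P}$ on the solid ellipse equals $\norm{P}$ on its boundary curve by the maximum principle, and the degenerate case $\rho=1$ is the classical statement on $[-2,2]$, it suffices to treat $\fre_\rho$. Recall that $T_n^{([-2,2])}=\wti{T}_n$, where $\wti{T}_n(w+w^{-1})=w^n+w^{-n}$. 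First I would record, writing $z(\phi)=\rho e^{i\phi}+\rho^{-1}e^{-i\phi}$,
\[
\abs{\wti{T}_n(z(\phi))}^2=\abs{\rho^n e^{in\phi}+\rho^{-n}e^{-in\phi}}^2=\rho^{2n}+\rho^{-2n}+2\cos(2n\phi),
\]
so that $\norm{\wti{T}_n}_{\fre_\rho}=\rho^n+\rho^{-n}$ and the extreme points are exactly $z_j:=z(\pi j/n)$ for $j=0,\dots,2n-1$, at which $\wti{T}_n(z_j)=(-1)^j(\rho^n+\rho^{-n})$.

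I would then argue by contradiction: suppose some monic $P$ of degree $n$ satisfies $\norm{P}_{\fre_\rho}<\rho^n+\rho^{-n}$, and put $R=\wti{T}_n-P$, a polynomial of degree $\le n-1$. At each extreme point $z_j$, since $\abs{\wti{T}_n(z_j)}=\norm{\wti{T}_n}_{\fre_\rho}$ and $\wti{T}_n(z_j)$ is real,
\[
\Re\big[\overline{\wti{T}_n(z_j)}\,R(z_j)\big]=\abs{\wti{T}_n(z_j)}^2-\Re\big[\overline{\wti{T}_n(z_j)}\,P(z_j)\big]\ge\norm{\wti{T}_n}_{\fre_\rho}^2-\norm{\wti{T}_n}_{\fre_\rho}\norm{P}_{\fre_\rho}>0,
\]
and because $\overline{\wti{T}_n(z_j)}=(-1)^j\norm{\wti{T}_n}_{\fre_\rho}$ this reads $(-1)^j\,\Re R(z_j)>0$ for $j=0,\dots,2n-1$. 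Now $R(z(\phi))=R(w+w^{-1})|_{w=\rho e^{i\phi}}$ is a Laurent polynomial in $w$ with exponents between $-(n-1)$ and $n-1$, hence a trigonometric polynomial in $\phi$ of degree $\le n-1$; therefore $r(\phi):=\Re R(z(\phi))$ is a \emph{real} trigonometric polynomial of degree $\le n-1$, and it is not identically zero since $r(0)=\Re R(z_0)>0$. Such an $r$ has at most $2(n-1)$ zeros in $[0,2\pi)$. But the relations $(-1)^j r(\pi j/n)>0$ force $r$ to change sign on each of the $2n$ arcs between consecutive points $\pi j/n$ (including the wrap-around arc), producing at least $2n$ zeros in $[0,2\pi)$ — a contradiction. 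Hence $\norm{P}_{\fre_\rho}\ge\rho^n+\rho^{-n}=\norm{\wti{T}_n}_{\fre_\rho}$ for every monic $P$ of degree $n$, so $\wti{T}_n$ is a norm minimizer, and by the uniqueness established in Section~\ref{s1} we get $T_n^{(\fre_\rho)}=\wti{T}_n=T_n^{([-2,2])}$.

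The steps requiring the most care are purely organizational: verifying that $\wti{T}_n$ has precisely these $2n$ extreme points with the asserted alternating real values, and matching the degree bound on $r$ against the $2n$ sign changes forced around the circle. The one genuinely non-mechanical idea is the passage to the quantity $\Re[\overline{\wti{T}_n(z_j)}\,R(z_j)]$: a direct estimate of $\norm{P}_{\fre_\rho}$ does not work, because one cannot multiply the lower bounds for the individual linear factors of $P$ (they are attained at different points of $\fre_\rho$), whereas this real-part device is exactly the complex substitute for the sign alternation used when $\fre\subset\bbR$, and once it is in hand the zero count of $r$ closes the argument just as in the interval case.
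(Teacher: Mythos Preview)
Your proof is correct. Both you and the paper begin with the same observation: on the ellipse $\fre_\rho$, the polynomial $\wti T_n$ attains its sup norm $\rho^n+\rho^{-n}$ at exactly $2n$ points $z_j$, with the \emph{real} alternating values $\wti T_n(z_j)=(-1)^j(\rho^n+\rho^{-n})$. From here the arguments diverge.

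The paper does not prove Theorem~\ref{TC} directly but recovers it as the case $n=1$ of Theorem~\ref{T1.4}, whose engine is Lemma~\ref{L4.1}: if a monic degree-$n$ polynomial $P$ hits $\pm\|P\|_K$ at $2n$ points of $K$, then $P=T_n^{(K)}$. That lemma is proved by a composition/preimage trick---$T_1(z)=z$ is the Chebyshev polynomial of the two-point set $\{\pm\|P\|_K\}$, and the result from \cite{CSZ3} on Chebyshev polynomials of polynomial preimages then forces $P$ to be Chebyshev on $P^{-1}(\{\pm\|P\|_K\})\subset K$, hence on $K$. Your route is instead a direct equioscillation argument: you parametrize $\fre_\rho$ by the circle, observe that $\Re\bigl(\wti T_n-P\bigr)\circ z(\phi)$ is a real trigonometric polynomial of degree $\le n-1$, and force $2n$ sign changes from the alternation $(-1)^j\Re R(z_j)>0$, overrunning the $2(n-1)$ zero budget. (Since $\wti T_n(z_j)$ is already real, your ``complex substitute'' $\Re[\overline{\wti T_n(z_j)}R(z_j)]$ collapses to the ordinary real alternation, which is why the interval argument goes through verbatim.)

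What each buys: your argument is entirely self-contained and elementary---no appeal to \cite{CSZ3}---and makes transparent why the ellipse behaves like an interval. The paper's Lemma~\ref{L4.1} is more portable: once stated, it applies immediately to the level curves $\fre_n^\alpha$ of any period-$n$ set (Theorem~\ref{T1.4}), where there is no global circle parametrization making $\Re R$ a single trigonometric polynomial, so your zero-counting would need reworking component by component.
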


We note that $[-2,2]$ is the image of $\partial\bbD$ under the Joukowski map $z\mapsto x=z+z^{-1} \, (z=e^{i\theta}\Rightarrow x=2\cos\theta)$.  Let $z(x) = \frac{1}{2}[x+\sqrt{x^2-4}]$ where we take the branch of square root on $\bbC\setminus [-2,2]$ that behaves like $x$ near $x=\infty$.  Then the Green's function for $[-2,2]$ is $\log z(x)$ so $C([-2,2])=1$.  The Chebyshev polynomials of $[-2,2]$ are given by
\begin{equation}\label{1.10}
  T_n^{([-2,2])}(x) = z(x)^n+z(x)^{-n}
\end{equation}
so for $\fre=[-2,2]$ we have $t_n^{(\fre)}=2=2C(\fre)^n$ and hence $\calW(\fre)=2$ (saturating a lower bound of Schiefermayr \cite{SchLB} for $\fre\subset\bbR$).

The ellipses with foci $\pm 2$ are precisely the sets of the form
\begin{equation}\label{1.11}
  \fre^\alpha = \{x\in\bbC \st |z(x)|=e^\alpha\}
\end{equation}
(Here and in the rest of the paper, the reader needs to be careful to distinguish $\fre^\alpha$ from $e^\alpha$!) for some $\alpha>0$.  By Theorem \ref{TC} and \eqref{1.10}, one has that
\begin{equation}\label{1.12}
  t_n^{(\fre^\alpha)} = e^{n\alpha}+e^{-n\alpha}
\end{equation}
The Green's function for $\fre^\alpha$ is $\log\,z(x)-\alpha$ so $C(\fre^\alpha)=e^\alpha$ and we have that
\begin{equation}\label{1.13}
  t_n^{(\fre^\alpha)} = (1+e^{-2n\alpha})C(\fre^\alpha)^n
\end{equation}
and hence ($\fre=[-2,2]$)
\begin{equation}\label{1.14}
  \calW(\fre^\alpha) = \frac{1}{2}(1+e^{-2n\alpha})\calW(\fre)
\end{equation}

Section \ref{s4} generalizes Faber's results.  Recall that a period-$n$ set is a subset $\fre_n\subset\bbR$ so that there is a degree $n$ polynomial $P_n$ with $\fre_n=P_n^{-1}([-2,2])\equiv\{z\in\bbC \st P_n(z)\in[-2,2]\}$. 
These are the spectra of period $n$ Jacobi matrices (see Geronimo--Van Assche \cite{GvA}, Peherstorfer \cite{Per0,Per1,Per2,Per3}, Totik \cite{Totik00,Totik01,Totik09,Totik12} or Simon \cite[Chap. 5]{SimonSzego}).  We will prove in Section \ref{s4} that

\renewcommand\thetheorem{\arabic{theorem}}
\numberwithin{theorem}{section}
\setcounter{theorem}{3}
\begin{theorem} \lb{T1.4} Let $\fre_n$ be a period-$n$ set and $G_n(z)$ its Green's function.  Let $\fre_n^\alpha=\{z \st G_n(z)=\alpha\}$ for some $\alpha>0$.  Then for $k=1,2,\dots$, one has that
\begin{equation}\label{1.15}
  T_{kn}^{(\fre_n^\alpha)} = T_{kn}^{(\fre_n)}
\end{equation}
and
\begin{equation}\label{1.15A}
  t_{kn}^{(\fre_n^\al)} = \cosh(kn\al) t_{kn}^{(\fre_n)}
\end{equation}
\end{theorem}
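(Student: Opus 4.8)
The plan is to reduce Theorem~\ref{T1.4} to a single ``composition law'' for Chebyshev polynomials of polynomial preimages. The statement I would isolate is: \emph{if $P$ is a polynomial of exact degree $n$ with leading coefficient $a\ne 0$ and $F\subset\bbC$ is compact and infinite, then, with $\fre:=P^{-1}(F)$, one has for every $k\ge1$}
\begin{equation}\label{complaw}
  T_{kn}^{(\fre)}(\zeta)=a^{-k}\,T_k^{(F)}\big(P(\zeta)\big),\qquad t_{kn}^{(\fre)}=|a|^{-k}\,t_k^{(F)}.
\end{equation}
For $F=[-2,2]$ this is the classical description of the Chebyshev polynomials of a period-$n$ set at degrees divisible by $n$ (cf.\ Peherstorfer \cite{Per1} or \cite[Ch.~5]{SimonSzego}); the point I want to use is that the \emph{same} proof goes through for an arbitrary compact infinite $F$, in particular for an ellipse.

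Granting \eqref{complaw}, Theorem~\ref{T1.4} follows quickly. First one checks the set identity $\fre_n^\al=P_n^{-1}(\fre^{n\al})$: the Green's function of the period-$n$ set composes as $G_n(\zeta)=\tfrac1n G_{[-2,2]}(P_n(\zeta))$ on $\bbC\setminus\fre_n$, so $G_n(\zeta)=\al$ exactly when $G_{[-2,2]}(P_n(\zeta))=n\al$, i.e.\ when $P_n(\zeta)$ lies on the level set $\{G_{[-2,2]}=n\al\}$, which by \eqref{1.11} is the ellipse $\fre^{n\al}$. Now apply \eqref{complaw} with $P=P_n$ (leading coefficient $a_n$). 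With $F=[-2,2]$ it gives $T_{kn}^{(\fre_n)}=a_n^{-k}T_k^{([-2,2])}(P_n(\cdot))$ and, since $t_k^{([-2,2])}=2$, $t_{kn}^{(\fre_n)}=2|a_n|^{-k}$. With $F=\fre^{n\al}$ it gives $T_{kn}^{(\fre_n^\al)}=a_n^{-k}T_k^{(\fre^{n\al})}(P_n(\cdot))$ and $t_{kn}^{(\fre_n^\al)}=|a_n|^{-k}t_k^{(\fre^{n\al})}$. But by Faber's Theorem~\ref{TC} one has $T_k^{(\fre^{n\al})}=T_k^{([-2,2])}$, and by \eqref{1.12}, $t_k^{(\fre^{n\al})}=e^{kn\al}+e^{-kn\al}=2\cosh(kn\al)$. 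Comparing the two applications yields $T_{kn}^{(\fre_n^\al)}=T_{kn}^{(\fre_n)}$, which is \eqref{1.15}, and $t_{kn}^{(\fre_n^\al)}=2|a_n|^{-k}\cosh(kn\al)=\cosh(kn\al)\,t_{kn}^{(\fre_n)}$, which is \eqref{1.15A}.

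It remains to prove \eqref{complaw}. The bound $t_{kn}^{(\fre)}\le|a|^{-k}t_k^{(F)}$ is immediate: $a^{-k}T_k^{(F)}(P(\cdot))$ is monic of degree $kn$, and since $P$ is onto $\bbC$ we have $P(\fre)=F$, so its sup norm over $\fre$ equals $|a|^{-k}\|T_k^{(F)}\|_F$. For the reverse bound I would average over the fibers of $P$. Given any monic $S$ of degree $kn$, write $P(\zeta)-x=a\prod_{\ell=1}^{n}\big(\zeta-\zeta_\ell(x)\big)$ and set $V(x):=\tfrac{a^k}{n}\sum_{\ell=1}^{n}S\big(\zeta_\ell(x)\big)$; being symmetric in the roots of $P(\cdot)-x$, $V$ is a polynomial in $x$. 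The crucial --- and only delicate --- point is that $V$ is \emph{monic of degree exactly $k$}: writing $S(\zeta)=\sum_{j=0}^{n-1}S_j\big(P(\zeta)\big)\zeta^j$ (the unique representation of this form), matching degrees forces $\deg S_j\le k-1$ for $j\ge1$ and $S_0(y)=a^{-k}y^k+(\text{lower order})$; then $\sum_\ell S(\zeta_\ell(x))=nS_0(x)+\sum_{j=1}^{n-1}S_j(x)\,p_j(x)$ with $p_j(x):=\sum_\ell\zeta_\ell(x)^j$, and for $1\le j\le n-1$ the power sum $p_j$ is independent of $x$ (by Newton's identities, since only the constant term of $P(\cdot)-x$ varies with $x$ and that term first enters $p_n$). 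Hence $V(x)=x^k+(\text{lower order})$. Finally, for $x\in F$ each $\zeta_\ell(x)$ lies in $P^{-1}(F)=\fre$, so $|V(x)|\le|a|^k\|S\|_\fre$; thus $t_k^{(F)}\le\|V\|_F\le|a|^k\|S\|_\fre$, i.e.\ $\|S\|_\fre\ge|a|^{-k}t_k^{(F)}$. Taking the infimum over monic $S$ of degree $kn$ gives $t_{kn}^{(\fre)}\ge|a|^{-k}t_k^{(F)}$; this value is attained by $a^{-k}T_k^{(F)}(P(\cdot))$, so by uniqueness of the Chebyshev polynomial the latter equals $T_{kn}^{(\fre)}$.

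I do not expect a genuine obstacle. The one step needing real care is the claim that the averaging operator $S\mapsto V$ sends monic polynomials of degree $kn$ to monic polynomials of degree $k$ --- this is precisely where the hypothesis that $\fre$ is the \emph{full} preimage $P^{-1}(F)$ enters, via the $x$-independence of $p_j$ for $j<n$. The only external input beyond this is Faber's Theorem~\ref{TC} (used only through the values $T_k^{(\fre^{n\al})}=T_k^{([-2,2])}$ and $t_k^{(\fre^{n\al})}=2\cosh(kn\al)$ recorded in \eqref{1.12}); the remaining ingredients --- the degree bookkeeping for the $S_j$, Newton's identities, and the compactness and infinitude of $P^{-1}(F)$ --- are routine.
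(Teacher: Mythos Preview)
Your argument is correct, but it follows a different route from the paper's.

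The paper reduces to $k=1$ (a period-$n$ set is also period-$kn$), then invokes the Blaschke--Joukowski representation
\[
  T_n(z)=\tfrac{1}{2}\,t_n^{(\fre_n)}\bigl(B_n(z)^n+B_n(z)^{-n}\bigr)
\]
from \cite[(2.4)]{CSZ1}. On $\fre_n^\al$ one has $|B_n|^{-1}=e^\al$, so $T_n$ takes the values $\pm\cosh(n\al)\,t_n^{(\fre_n)}$ at exactly $2n$ points of $\fre_n^\al$. The paper then applies a short alternation-type criterion (Lemma~\ref{L4.1}: a monic degree-$n$ polynomial attaining $\pm\|P\|_K$ at $2n$ points of $K$ is the Chebyshev polynomial of $K$), whose proof in turn quotes the composition law $T_n^{(P^{-1}(\fre))}=T_1^{(\fre)}\circ P$ from \cite[Theorem~6.1]{CSZ3} in the trivial case $\fre=\{\pm\|P\|_K\}$.

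Your proof instead establishes the full composition law \eqref{complaw} for arbitrary compact $F$ via fiber averaging, then applies it twice---once with $F=[-2,2]$ and once with $F$ the ellipse $\fre^{n\al}$---and closes the loop with Faber's Theorem~\ref{TC}. The identification $\fre_n^\al=P_n^{-1}(\fre^{n\al})$ via $G_n=\tfrac{1}{n}G_{[-2,2]}\circ P_n$ is the same geometric fact underlying both proofs. Your fiber-averaging argument (the $S_j(P)\zeta^j$ decomposition together with the observation that Newton's identities keep $p_1,\dots,p_{n-1}$ independent of $x$) is a clean, self-contained proof of the composition law, essentially re-deriving \cite[Theorem~6.1]{CSZ3}. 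What you gain is a statement of independent interest and a proof that makes the role of Faber's theorem completely transparent; what the paper's route gains is brevity, since the Blaschke formula and Lemma~\ref{L4.1} are each only a few lines once the machinery of \cite{CSZ1,CSZ3} is in hand.
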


\begin{remarks} 1. If $n=1$, $\fre_n$ is just a single interval $\fre_1=[a,b]$ and this is just Faber's Theorem \ref{TC}.

2.  The result of \cite{CSZ3}, discussed further in Section \ref{s3}, that if $\frf_n$ is a lemniscate of the form \eqref{1.7}, then its Chebyshev polynomials of degree $nk$ are of the form \eqref{1.8} implies a complex version of Theorem \ref{T1.4}.  For the level sets of the Green's function of $\frf_n$ are again lemniscates with just a different value of $\alpha$ so the Chebyshev polynomials are the same since \eqref{1.8} holds for all values of $\al$.
\end{remarks}

\begin{theorem} \lb{T1.5} Let $\fre$ be a compact (not necessarily connected) simply connected subset of $\bbC$ which is regular for potential theory.  Let $G_\fre$ be its Green's function and $\fre^\alpha = \{z \st G_\fre(z)=\alpha\}$ for some $\alpha > 0$.  Then
\begin{equation}\label{1.16}
  \alpha\mapsto\calW_n(\fre^\alpha)
\end{equation}
is monotone decreasing in $\alpha$. In particular, if $\fre$ obeys a TW bound of the form \eqref{1.4}, so does each $\fre^\alpha$ with the same or smaller $Q$.
\end{theorem}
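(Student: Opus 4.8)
The plan is to combine the elementary Bernstein--Walsh estimate
$|P(z)|\le\norm{P}_\fre\,e^{nG_\fre(z)}$, valid for any degree-$n$ polynomial $P$ and any $z$, with the way the Green's function and logarithmic capacity transform under passing to a level set. The monotonicity will then drop out of pure bookkeeping, and the ``in particular'' is immediate from it.

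First I would record the potential theory of $\fre^\al$. Let $\widehat{\fre^\al}$ be its fill-in, the compact set with $\partial\widehat{\fre^\al}=O\partial\fre^\al$. Since $\fre$ is regular, $G_\fre$ is continuous on $\bbC$ and $\{G_\fre>0\}=\bbC\setminus\fre$, so for $\al>0$ the set $\{G_\fre>\al\}$ is open, contains a neighborhood of $\infty$, and is harmonic-complement-free: a bounded component $U$ would have $G_\fre$ harmonic on $U$ and identically $\al$ on $\partial U$ (by continuity, $G_\fre\ge\al$ and $\le\al$ there), contradicting the maximum principle. Hence $\{G_\fre>\al\}$ is connected with boundary exactly $\fre^\al$, so $\widehat{\fre^\al}=\{G_\fre\le\al\}$ is compact and simply connected in the paper's sense. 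Moreover $G_\fre(\cdot)-\al$ is positive and harmonic on $\{G_\fre>\al\}$, vanishes continuously on $\fre^\al$, and obeys $G_\fre(z)-\al=\log|z|-\log(e^\al C(\fre))+o(1)$ as $z\to\infty$; by the characterization of the Green's function this shows $\widehat{\fre^\al}$ is regular, $G_{\widehat{\fre^\al}}=G_\fre-\al$ on $\{G_\fre>\al\}$, and $C(\fre^\al)=e^\al C(\fre)$. Finally, $\fre^\al$ and $\widehat{\fre^\al}$ have the same Chebyshev polynomials, hence the same $t_n$ and $\calW_n$, by the maximum-principle observation in the Introduction.

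For the monotonicity I would fix $0\le\al\le\be$ (the case $\al=0$ being allowed, since then $\widehat{\fre^0}=\fre$) and apply Bernstein--Walsh to $P:=T_n^{(\fre^\al)}=T_n^{(\widehat{\fre^\al})}$, monic of degree $n$, relative to $\widehat{\fre^\al}$: for $z\notin\widehat{\fre^\al}$,
$$
|P(z)|\le t_n^{(\fre^\al)}\,e^{nG_{\widehat{\fre^\al}}(z)}=t_n^{(\fre^\al)}\,e^{n(G_\fre(z)-\al)}.
$$
Restricting to $z\in\fre^\be$, where $G_\fre(z)=\be>\al$, gives $\norm{P}_{\fre^\be}\le t_n^{(\fre^\al)}e^{n(\be-\al)}$, and since $P$ is monic of degree $n$ this forces $t_n^{(\fre^\be)}\le t_n^{(\fre^\al)}e^{n(\be-\al)}$. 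Dividing by $C(\fre^\be)^n=e^{n\be}C(\fre)^n$ and using $C(\fre^\al)=e^\al C(\fre)$,
$$
\calW_n(\fre^\be)=\frac{t_n^{(\fre^\be)}}{C(\fre^\be)^n}\le\frac{t_n^{(\fre^\al)}e^{n(\be-\al)}}{e^{n\be}C(\fre)^n}=\frac{t_n^{(\fre^\al)}}{(e^\al C(\fre))^n}=\calW_n(\fre^\al),
$$
which is the asserted decrease. Then \eqref{1.4} for $\fre$ reads $\calW_n(\fre)\le Q$ for all $n$, whence $\calW_n(\fre^\al)\le\calW_n(\fre)\le Q$ for all $n$, i.e.\ $t_n^{(\fre^\al)}\le QC(\fre^\al)^n$; and one may take the constant for $\fre^\al$ to be $\sup_n\calW_n(\fre^\al)\le\sup_n\calW_n(\fre)\le Q$.

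I do not expect a genuine obstacle; the only delicate point is the bookkeeping of the first paragraph --- that $\widehat{\fre^\al}$ inherits compactness, simple-connectedness, and regularity, with $G_{\widehat{\fre^\al}}=G_\fre-\al$ --- which is precisely what licenses applying Bernstein--Walsh to it. One should be aware that the bound $t_n^{(\fre^\be)}\le e^{n(\be-\al)}t_n^{(\fre^\al)}$ is usually far from sharp (already for $\fre=[-2,2]$ it gives only $\calW_n(\fre^\al)\le2$, whereas by \eqref{1.13} the true value is $1+e^{-2n\al}$), but sharpness is not needed for monotonicity.
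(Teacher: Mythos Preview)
Your proof is correct and follows essentially the same route as the paper's. The only cosmetic difference is packaging: the paper phrases the key estimate by introducing the lemniscate $\frf_n=\{z:|T_n(z)|\le\norm{T_n}_\fre\}$, noting that $\fre\subset\frf_n$ forces $G_\fre\ge G_{\frf_n}$ and hence $\fre^\al\subset\frf_n^\al$, whereas you invoke Bernstein--Walsh by name; these are the same inequality. Your first paragraph supplies more detail on the potential theory of $\fre^\al$ (connectedness of $\{G_\fre>\al\}$, regularity, $G_{\widehat{\fre^\al}}=G_\fre-\al$) than the paper spells out, but the content is identical.
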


Given our result, \eqref{1.6}, of \cite{CSZ1}, we see that when $\fre\subset\bbR$ is a PW set, we have that
\begin{equation}\label{1.17}
  t_n^{(\fre^\alpha)} \le 2 \exp\{PW(\fre)\} C(\fre^\alpha)^n
\end{equation}
We will be able to improve this to

\begin{theorem} \lb{T1.6} If $\fre\subset\bbR$ is a PW set and $\fre^\alpha = \{z \st G_\fre(z)=\alpha\}$ for some $\alpha > 0$, then
\begin{equation}\label{1.18}
  t_n^{(\fre^\alpha)} \le (1+e^{-n\alpha})\exp\{PW(\fre)\} C(\fre^\alpha)^n
\end{equation}
\end{theorem}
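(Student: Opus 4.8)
The plan is to combine the Totik--Widom bound \eqref{1.6} for $\fre\subset\bbR$ with a sharpened comparison between $t_n^{(\fre^\alpha)}$ and $t_n^{(\fre)}$ that captures the extra factor $(1+e^{-n\alpha})$ rather than the crude $2$. The model case is Faber's computation \eqref{1.13}: for $\fre=[-2,2]$ one has $t_n^{(\fre^\alpha)} = (1+e^{-2n\alpha})\,C(\fre^\alpha)^n$, where the factor $1+e^{-2n\alpha}$ comes from $z(x)^n + z(x)^{-n}$ evaluated on the level curve $|z(x)| = e^\alpha$. Since $2\exp\{PW([-2,2])\} = 2$ (there are no critical points), we want to recover something in that spirit in general. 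So the first step is to establish a general principle: if $\fre$ is regular and $\fre^\alpha = \{z : G_\fre(z) = \alpha\}$, then the polynomial $\Phi_n(z) := z_\fre(z)^n$-like object on the exterior, made monic and symmetrized across the level curve, gives a good test polynomial for $\fre^\alpha$. Concretely, I would try to show
\begin{equation}\label{p.1}
  t_n^{(\fre^\alpha)} \le \bigl(1 + e^{-2n\alpha}\bigr)^{1/2}\text{-type correction} \cdot t_n^{(\widehat{\fre^\alpha})},
\end{equation}
but the cleaner route is probably through Theorem \ref{T1.5} reasoning plus an explicit extremal competitor.

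Here is the approach I would actually pursue. Let $B(z) = \exp(-G_\fre(z) - i\widetilde{G}_\fre(z))$ be the complex Green's function (conformal map of the exterior of $\widehat\fre$ onto the exterior of the disk of radius $C(\fre)$, suitably normalized so $B(z)\sim C(\fre)/z$ at infinity); on $\fre^\alpha$ we have $|B(z)| = C(\fre)e^{-\alpha} / $ (normalization) — more precisely $|B(z)| = e^{-\alpha}$ times the boundary value, so that $G_{\fre^\alpha}(z) = G_\fre(z) - \alpha$ and $C(\fre^\alpha) = e^\alpha C(\fre)$. The key identity to exploit is that on the level set $\fre^\alpha$ the ``inner'' and ``outer'' pieces of a Laurent-type expression balance: a monic polynomial $P$ of degree $n$ with $\|P\|_{\fre}$ small, combined with the reflected term, produces a monic polynomial of degree $n$ on $\fre^\alpha$ whose sup norm is bounded by $(1+e^{-n\alpha})$ times the relevant quantity. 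So Step 2: take $P = T_n^{(\fre)}$, the Chebyshev polynomial of $\fre$, with $t_n^{(\fre)} \le 2\exp\{PW(\fre)\}C(\fre)^n$ by \eqref{1.6}. Step 3: on $\fre^\alpha$, use the representation of the Chebyshev-type extremal function via $B$; the natural candidate is something like $\tfrac{1}{2}\bigl(\text{leading term} + \text{reflected term}\bigr)$, analogous to $z^n + z^{-n}$, whose monic normalization has norm on $\fre^\alpha$ controlled by $t_n^{(\fre)} + e^{-n\alpha}(\text{something})$, and then optimize. Step 4: divide by $C(\fre^\alpha)^n = e^{n\alpha}C(\fre)^n$ and read off the constant $(1+e^{-n\alpha})\exp\{PW(\fre)\}$.

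The main obstacle I expect is making Step 3 precise: in Faber's case the reflection symmetry $z \leftrightarrow z^{-1}$ of the Joukowski map gives the exact decomposition $T_n(x) = z^n + z^{-n}$, but for general $\fre$ there is no such exact polynomial identity — $B(z)^n$ is not a polynomial and its reciprocal is not defined inside. The honest fix is to use $\fre\subset\bbR$: then reflection in the real axis is available, and more importantly one can bootstrap from the known one-dimensional structure. I suspect the right move is: the level set $\fre^\alpha$ of a real PW set is itself a (complex) PW set with $PW(\fre^\alpha) \le PW(\fre)$ (critical points of $G_{\fre^\alpha}$ are the same points, with Green's values $G_\fre(w) - \alpha \le G_\fre(w)$, and any that drop to $\le 0$ simply leave), so Theorem \ref{T1.5} gives monotonicity, and then the improvement over the bare constant $2\exp\{PW\}$ must come from a direct estimate on $t_n^{(\fre^\alpha)}$ using a test polynomial built from $T_n^{(\fre)}$ pushed out to the level curve via the exponential of the Green's function. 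I would therefore spend the bulk of the argument constructing that test polynomial explicitly — writing it as $T_n^{(\fre)}(z)$ corrected by a lower-order term chosen so that on $\fre^\alpha$ the correction has size $\le e^{-n\alpha}\exp\{PW(\fre)\}C(\fre^\alpha)^n$, using the maximum principle for $G_\fre$ and the fact that $G_\fre = \alpha$ on $\fre^\alpha$ — and then verifying the resulting monic polynomial has the claimed norm, giving \eqref{1.18}.
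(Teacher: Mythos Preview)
Your high-level strategy is right and matches the paper's: take $T_n^{(\fre)}$ itself as the competitor for $\fre^\alpha$, bound $\|T_n^{(\fre)}\|_{\fre^\alpha}$ by something of the form $\tfrac{1}{2}\|T_n^{(\fre)}\|_\fre\,(e^{n\alpha}+e^{-n\alpha})$, and then apply \eqref{1.6}. But your proposal does not actually supply that middle bound, and the route you end on --- ``write $T_n^{(\fre)}(z)$ corrected by a lower-order term chosen so that on $\fre^\alpha$ the correction has size $\le e^{-n\alpha}\exp\{PW(\fre)\}C(\fre^\alpha)^n$'' --- is both unnecessary and not clearly workable. No correction is needed: $T_n^{(\fre)}$ is already monic of degree $n$, so the only task is to bound its sup norm on $\fre^\alpha$.

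The key fact you are circling but never land on is this: the real set $\fre_n=\{x\in\bbR : |T_n^{(\fre)}(x)|\le\|T_n^{(\fre)}\|_\fre\}$ is a \emph{period-$n$ set} (it is $T_n^{-1}([-\|T_n\|_\fre,\|T_n\|_\fre])$), and for period-$n$ sets one has the exact identity
\[
T_n^{(\fre)}(z)=\tfrac12\|T_n^{(\fre)}\|_\fre\bigl(B_n(z)^n+B_n(z)^{-n}\bigr),
\]
with $|B_n|=\exp(-G_{\fre_n})$ (this is \cite[(2.4)]{CSZ1}). That gives immediately
\[
|T_n^{(\fre)}(z)|\le\tfrac12\|T_n^{(\fre)}\|_\fre\bigl(e^{nG_{\fre_n}(z)}+e^{-nG_{\fre_n}(z)}\bigr).
\]
Since $\fre\subset\fre_n$, one has $G_\fre\ge G_{\fre_n}$, so $\fre^\alpha$ lies inside $\fre_n^\alpha$ and therefore $G_{\fre_n}\le\alpha$ on $\fre^\alpha$; the cosh is monotone, so $\|T_n^{(\fre)}\|_{\fre^\alpha}\le\tfrac12\|T_n^{(\fre)}\|_\fre(e^{n\alpha}+e^{-n\alpha})$. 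Now \eqref{1.6} and $C(\fre^\alpha)=e^\alpha C(\fre)$ finish the job (in fact with the sharper constant $1+e^{-2n\alpha}$). Your intuition about ``$z^n+z^{-n}$'' was exactly right; what was missing was recognizing that passing to the period-$n$ superset $\fre_n\supset\fre$ makes that Joukowski-type identity hold \emph{exactly}, so no ad hoc polynomial construction is required.
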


\begin{remarks} 1. As $n\to\infty$, this beats \eqref{1.17} by a factor of $2$.

2.  We note that \eqref{1.18} has $PW(\fre)$ and not $PW(\fre^\alpha)$.  If $\fre$ is a finite gap set, and $\alpha$ is small, $PW(\fre^\alpha)=PW(\fre)-k\alpha$, where $k$ is the number of gaps.  As $\alpha$ increases, $PW(\fre^\alpha)$ shrinks further as $\fre^\alpha$ absorbs some of the critical points of $G_\fre$.
\end{remarks}

JC and MZ would like to thank F. Harrison and E. Mantovan for the hospitality of Caltech where some of this work was done.

\section{Zero Counting Measure} \lb{s2}

In this section, we study the asymptotics of the zero counting measure for Chebyshev polynomials and, in particular, prove Theorems \ref{T1.1} and \ref{T1.2}.  The theme is that in many ways the density of zeros wants to converge to the potential theoretic equilibrium measure for $\fre$. The only exception is when there are analytic pieces of $\partial\fre$.  We suspect this is true in much greater generality than we can prove it here (see the conjecture below).

The key to understanding this theme is

\setcounter{theorem}{3}
\renewcommand\thetheorem{\Alph{theorem}}

\begin{theorem} [\cite{SaT}] \label{TD} Outside the convex hull of $\fre$, one has that
\begin{equation}\label{2.1}
  |T_n(z)|^{1/n} \to C(\fre) \exp\{G_\fre(z)\}
\end{equation}
\end{theorem}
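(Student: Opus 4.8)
The plan is to obtain the two–sided asymptotics by pairing the classical Bernstein--Walsh upper bound with a Harnack-inequality argument; the link between the two is the fact that all zeros of $T_n$ lie in $\cvh(\fre)$, so that $\tfrac1n\log|T_n|$ is \emph{harmonic}, not merely subharmonic, outside $\cvh(\fre)$. So first I would establish this confinement of zeros (classically due to Fej\'er): if $w_0$ were a zero of $T_n^{(\fre)}$ with $w_0\notin\cvh(\fre)$, choose a line $L$ strictly separating $w_0$ from $\cvh(\fre)$, let $H$ be the closed half-plane bounded by $L$ that contains $\fre$, and let $w_0^\ast$ be the reflection of $w_0$ in $L$; then $|z-w_0^\ast|\le|z-w_0|$ for every $z\in H\supseteq\fre$, so $\widetilde T_n:=T_n\cdot(z-w_0^\ast)/(z-w_0)$ is monic of degree $n$ with $\norm{\widetilde T_n}_\fre\le t_n$, yet $\widetilde T_n\neq T_n$ because $w_0\notin L$ --- contradicting the uniqueness proved above. (In the degenerate case where $\cvh(\fre)$ lies on a line --- i.e.\ $\fre$ is, up to a rigid motion, a subset of $\bbR$ --- this is the classical fact that the Chebyshev polynomial of a real set has all zeros in its convex hull.) Hence $\tfrac1n\log|T_n|$ is harmonic on $\Omega:=\hatt{\bbC}\setminus\cvh(\fre)$, with a logarithmic pole at $\infty$.

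Second, I would bring in the Green's function. By the Bernstein--Walsh lemma, $\tfrac1n\log|T_n(z)|\le \tfrac1n\log t_n+G_\fre(z)$ on the unbounded component of $\bbC\setminus\fre$ (apply the maximum principle to the subharmonic function $\tfrac1n\log|T_n|-G_\fre$, whose boundary values on $\partial\fre$ do not exceed $\tfrac1n\log t_n$). Write $L_n:=\tfrac1n\log t_n$ and $g_n:=\tfrac1n\log|T_n|-G_\fre$ on $\Omega$, where $G_\fre$ is harmonic. Since $G_\fre(z)=\log|z|-\log C(\fre)+o(1)$ and $\tfrac1n\log|T_n(z)|=\log|z|+o(1)$ as $z\to\infty$, the singularity of $g_n$ at $\infty$ is removable with $g_n(\infty)=\log C(\fre)$, so $g_n$ is harmonic on all of $\Omega$, and Bernstein--Walsh now reads $g_n\le L_n$ on $\Omega$. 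Consequently $h_n:=L_n-g_n$ is a \emph{nonnegative} harmonic function on the connected domain $\Omega$ (the complement in $\hatt{\bbC}$ of the convex compact set $\cvh(\fre)$), and $h_n(\infty)=L_n-\log C(\fre)=\log\!\bigl(t_n^{1/n}/C(\fre)\bigr)\to 0$ since $t_n^{1/n}\to C(\fre)$.

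Third, Harnack's inequality does the rest: for every compact $K\subset\Omega$ there is a constant $A_K$, depending only on $K$ and the fixed point $\infty\in\Omega$, with $\sup_K h_n\le A_K\,h_n(\infty)$ for all $n$; as $h_n(\infty)\to0$, this forces $h_n\to0$ locally uniformly on $\Omega$, i.e.\ $\tfrac1n\log|T_n(z)|\to\log C(\fre)+G_\fre(z)$ locally uniformly outside $\cvh(\fre)$, which is \eqref{2.1} upon exponentiating.

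The conceptual point of the proof --- and the thing one would have to spot --- is exactly that confining the zeros inside $\cvh(\fre)$ promotes $\tfrac1n\log|T_n|$ to a harmonic function, which turns the one-sided Bernstein--Walsh estimate into the sharp limit via Harnack. The only step that requires real care is the Fej\'er confinement (and in particular its degenerate case); Bernstein--Walsh and the $\limsup$ it gives are classical, and the Harnack step uses nothing beyond the already-quoted fact that $t_n^{1/n}\to C(\fre)$.
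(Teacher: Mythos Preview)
Your proof is correct and follows essentially the same route as the paper: Fej\'er's theorem confines the zeros to $\cvh(\fre)$ so that $\tfrac1n\log|T_n|-G_\fre$ is harmonic on $\hatt{\bbC}\setminus\cvh(\fre)$, Bernstein--Walsh gives the one-sided bound, and Harnack together with $t_n^{1/n}\to C(\fre)$ upgrades it to the limit. You have in fact been more careful than the paper's sketch: the paper's phrase ``the $\log$ of the ratio of the right to left side of \eqref{2.1}'' literally gives $\log C(\fre)+G_\fre-\tfrac1n\log|T_n|$, which vanishes at $\infty$ for every $n$ and would, if truly nonnegative, force it to be identically zero --- clearly not intended; your choice $h_n=\tfrac1n\log t_n+G_\fre-\tfrac1n\log|T_n|$ is the correct nonnegative harmonic function, with $h_n(\infty)=\log\bigl(t_n^{1/n}/C(\fre)\bigr)\to0$.
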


We provided another proof of this result as Theorem 3.2 of \cite{CSZ1}.  That proof was short.  The $\log$ of the ratio of the right to left side of \eqref{2.1} is a non-negative harmonic function on $\bbC\cup\{\infty\}\setminus\textrm{cvh}(\fre)$ by a theorem of Fej\'{e}r (which states that the zeros of $T_n$ lie within the convex hull of $\fre$) and by the Bernstein--Walsh lemma.  By the Faber--Fekete--Szeg\H{o} theorem (\cite{SzLB} or \cite[Theorem 4.3.10]{OT}), this harmonic function goes to zero at $\infty$ and so, by Harnack's inequality, everywhere on $\bbC\cup\{\infty\}\setminus\textrm{cvh}(\fre)$.

We next note the following theorem of Widom

\begin{theorem} [\cite{Widom67}] \lb{TE} Let $S$ be a closed subset of the unbounded component of $\bbC\setminus\fre$.  Then there is $N_S<\infty$ so that for all $n$, the number of zeros of $T_n$ in $S$ is at most $N_S$.
\end{theorem}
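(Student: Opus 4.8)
\smallskip
\noindent\textit{Proof proposal.} We may assume $C(\fre)>0$, as is implicit throughout. Let $\Omega$ denote the unbounded component of $\bbC\setminus\fre$, so that $\bbC\setminus\Omega=\hatt\fre$ is compact. By Fej\'er's theorem the zeros of $T_n$ all lie in $\cvh(\fre)$, a compact set, so after replacing $S$ by $S\cap\cvh(\fre)$ we may assume $S$ is compact; if $S\cap\cvh(\fre)=\varnothing$ there is nothing to prove. Since $S$ is then a compact subset of the open set $\Omega$, we may cover it by finitely many closed balls $\overline{B_1},\dots,\overline{B_k}$ with each $\overline{B_i}\subset\Omega$, and it suffices to bound, uniformly in $n$, the number of zeros of $T_n$ in a single fixed closed ball $\overline B\subset\Omega$.

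Fix such a $\overline B$ and choose a simply connected subdomain $U\subseteq\Omega$ of the Riemann sphere with $\overline B\subset U$ and $\infty\in U$ (join $B$ to a neighbourhood of $\infty$ inside the connected open set $\Omega$ by a simply connected tube). On $U$ the complex Green's function $\Phi_\fre$ of $\Omega$ — the analytic completion of $G_\fre$, normalised by $\Phi_\fre(z)=\log\!\big(z/C(\fre)\big)+o(1)$ as $z\to\infty$ — is single valued, so
\[
  g_n(z):=\frac{T_n(z)}{t_n\,e^{n\Phi_\fre(z)}}
\]
is analytic on $U$ (the order‑$n$ pole of $e^{n\Phi_\fre}$ at $\infty$ cancels that of $T_n$), with $g_n(\infty)=C(\fre)^n/t_n=1/\calW_n(\fre)$. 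As $e^{n\Phi_\fre}$ is zero‑free, $g_n$ and $T_n$ have the same zeros in $U$, so the number of zeros of $T_n$ in $\overline B$ equals that of $g_n$ in $\overline B$. Finally, by the Bernstein--Walsh lemma $|T_n(z)|\le t_n e^{nG_\fre(z)}=t_n\,|e^{n\Phi_\fre(z)}|$, hence $|g_n|\le1$ on $U$ and $\{g_n\}$ is a normal family there.

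Suppose now the number of zeros of $T_n$ in $\overline B$ were unbounded. Passing to a subsequence along which it tends to $\infty$, and then by normality to a further subsequence with $g_{n_k}\to g$ locally uniformly on $U$ (so $|g|\le 1$), the dichotomy of Hurwitz's theorem gives a contradiction as soon as $g\not\equiv0$: on a slightly enlarged ball $B'\Subset U$ with $g$ nonvanishing on $\partial B'$, the number of zeros of $g_{n_k}$ in $B'$ equals that of $g$ in $B'$ for all large $k$, a fixed finite number. Thus the whole matter reduces to showing $g\not\equiv0$, i.e.\ that $|T_n|/(t_n e^{nG_\fre})$ does not tend to $0$ locally uniformly on $\Omega$. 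This is immediate if $\fre$ obeys a Totik--Widom bound $t_n\le Q\,C(\fre)^n$, since then $g(\infty)=\lim_k 1/\calW_{n_k}(\fre)\ge 1/Q>0$; equivalently, writing $m_n$ for the number of zeros of $T_n$ in $\overline B$ and $\delta=\dist(\overline B,\fre)$, the monic polynomial $T_n(z)/\prod_{j}(z-w_j)$ of degree $n-m_n$ (the product over the zeros $w_j\in\overline B$) yields $(\delta/C(\fre))^{m_n}\le t_n/t_{n-m_n}\le\calW_n(\fre)$, which bounds $m_n$ once $\delta>C(\fre)$ and $\calW_n$ is bounded. The hard part — and the essential content of Widom's argument in \cite{Widom67} — is to handle the general case, i.e.\ to produce a subsequence along which $t_n/t_{n-m_n}$ (equivalently, the modulus of $g_n$) stays bounded below; here Theorem~\ref{TD} alone, giving only $\big(|T_n|/(t_n e^{nG_\fre})\big)^{1/n}\to1$ off $\cvh(\fre)$, does not suffice, and genuine potential theory on $\Omega$ is required rather than the maximum principle alone.
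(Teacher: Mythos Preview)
The paper does not prove Theorem~E at all; it is simply quoted from Widom~\cite{Widom67} and then used as input to Theorem~2.1. So there is no ``paper's proof'' to compare to.

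As for your argument, you set up a clean normal-families/Hurwitz framework, but then you yourself flag the gap and do not close it. Everything hinges on showing that the locally uniform limit $g$ of (a subsequence of) $g_n=T_n/(t_n e^{n\Phi_\fre})$ is not identically zero, and for this you offer only the special case where $\fre$ already satisfies a Totik--Widom bound. That is not available in general (indeed, much of the paper is devoted to investigating when TW holds), so invoking it here is begging the question for the general statement. Your alternative inequality $(\delta/C(\fre))^{m_n}\le\calW_n(\fre)$ is correct but has the same defect: it bounds $m_n$ only when $\calW_n$ is bounded (and, incidentally, only when $\delta>C(\fre)$, which you cannot arrange for an arbitrary ball in $\Omega$). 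Your closing sentence concedes exactly this: ``genuine potential theory on $\Omega$ is required rather than the maximum principle alone.'' That missing piece is the theorem, not a detail; what you have written is a reduction, not a proof. If you want to complete it, you must supply Widom's actual estimate showing that $|T_n(z)|\ge c\,t_n e^{nG_\fre(z)}$ for some $c>0$ independent of $n$ at at least one point of $\Omega$ (or an equivalent lower bound), without assuming TW.
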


This implies
\renewcommand\thetheorem{\arabic{theorem}}
\numberwithin{theorem}{section}
\setcounter{theorem}{0}
\begin{theorem} \lb{T2.1} Any limit point, $d\mu_\infty$, of $d\mu_n$, the zero counting measure of $T_n$, is supported in $\fre$.  Moreover, for all $z$ in the unbounded component of $\bbC\setminus\fre$,
\begin{equation}\label{2.2}
  \int \log|z-w|\, d\mu_\infty(w) = \int \log|z-w|\, d\mu_\fre(w)
\end{equation}
where $d\mu_\fre$ is the equilibrium measure for $\fre$.
\end{theorem}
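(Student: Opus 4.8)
The plan is to establish two separate assertions: that any weak-$*$ limit point $d\mu_\infty$ of the zero counting measures $d\mu_n$ is supported on $\fre$, and that its logarithmic potential agrees with that of the equilibrium measure $d\mu_\fre$ off the unbounded component of the complement. For the support claim, I would argue as follows. Let $S$ be any compact set contained in the unbounded component of $\bbC\setminus\fre$; by Widom's Theorem \ref{TE} there is $N_S<\infty$ bounding the number of zeros of every $T_n$ lying in $S$. Hence $\mu_n(S)\le N_S/n\to 0$. Taking a sequence $n_j$ along which $\mu_{n_j}\to\mu_\infty$ weakly, the portmanteau theorem gives $\mu_\infty(S^{\mathrm{int}})\le\liminf\mu_{n_j}(S^{\mathrm{int}})=0$, so $\mu_\infty$ charges no point of the open unbounded component. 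Since Fej\'er's theorem places all zeros of $T_n$ inside $\mathrm{cvh}(\fre)$, the measures $\mu_n$ are all supported in the fixed compact set $\mathrm{cvh}(\fre)$, so $\mu_\infty$ is too; combined with the previous sentence, $\mathrm{supp}\,\mu_\infty$ avoids the open unbounded component of $\bbC\setminus\fre$, i.e.\ $\mathrm{supp}\,\mu_\infty\subseteq\hatt\fre$. For the purposes of the stated theorem this is what ``supported in $\fre$'' should mean (the potential theory of $\fre$ and $\hatt\fre$ coincides), and I would insert a sentence noting this; alternatively one absorbs bounded complementary components by the same Widom-type argument if one wants the literal statement.

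For the potential identity \eqref{2.2}, fix $z_0$ in the unbounded component of $\bbC\setminus\fre$ and stay, for the moment, outside $\mathrm{cvh}(\fre)$ where Theorem \ref{TD} applies directly. There, $\tfrac1n\log|T_n(z_0)|=\tfrac1n\sum_j\log|z_0-w_j^{(n)}|=\int\log|z_0-w|\,d\mu_n(w)$ converges to $\log C(\fre)+G_\fre(z_0)=\int\log|z_0-w|\,d\mu_\fre(w)$, the last equality being the Frostman representation of the Green's function. So along the convergent subsequence, $\int\log|z_0-w|\,d\mu_{n_j}(w)\to\int\log|z_0-w|\,d\mu_\infty(w)$ — here one uses that $w\mapsto\log|z_0-w|$ is continuous and bounded on the fixed compact $\mathrm{cvh}(\fre)$ since $z_0\notin\mathrm{cvh}(\fre)$ — and also $\int\log|z_0-w|\,d\mu_{n_j}(w)\to\int\log|z_0-w|\,d\mu_\fre(w)$ by Theorem \ref{TD}. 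Hence the two potentials agree for all $z_0$ outside $\mathrm{cvh}(\fre)$. To upgrade this to all $z_0$ in the unbounded component, note that both $U^{\mu_\infty}(z):=\int\log|z-w|\,d\mu_\infty(w)$ and $U^{\mu_\fre}(z)$ are harmonic on the (open, connected) unbounded component of $\bbC\setminus\fre$ — each is harmonic away from the support of its measure, and both supports lie in $\hatt\fre$, hence away from that component — and they agree on the nonempty open subset lying outside $\mathrm{cvh}(\fre)$; by the identity theorem for harmonic functions on a connected open set, they agree throughout.

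I expect the only genuine subtlety to be bookkeeping about which complementary region the identity is asserted on and making sure the harmonic-continuation step is applied to a connected set: the unbounded component of $\bbC\setminus\fre$ is connected by definition, and its intersection with $\{z:z\notin\mathrm{cvh}(\fre)\}$ is a nonempty open set (it contains a neighborhood of $\infty$), and connectedness of that intersection is not even needed — harmonicity plus agreement on any nonempty open subset of a connected open set forces global agreement. A second minor point is justifying passage to the limit inside the potential integral: this is immediate because along the subsequence all $\mu_{n_j}$ and $\mu_\infty$ live on the common compact $\mathrm{cvh}(\fre)$ and the integrand is continuous there when $z_0$ is kept off $\mathrm{cvh}(\fre)$, so weak-$*$ convergence applies verbatim. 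Thus no part of the argument is hard; the proof is essentially an assembly of Theorems \ref{TD}, \ref{TE}, Fej\'er's theorem, and the Frostman/Faber--Fekete--Szeg\H o description of $G_\fre$.
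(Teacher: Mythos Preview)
Your proof is correct and follows essentially the same route as the paper: the support claim via Theorem~\ref{TE}, equality of the two potentials outside $\mathrm{cvh}(\fre)$ via Theorem~\ref{TD}, and then propagation to the full unbounded component by the identity principle for harmonic functions. The paper's version is simply a terser packaging of exactly these three ingredients.
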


\begin{proof}  The first sentence is an immediate consequence of Theorem \ref{TE}.  Let $h$ be the difference of the two sides of \eqref{2.2} on the unbounded component of $\bbC\setminus\fre$.  By the first sentence, $h$ is harmonic.  By \eqref{2.1}, $h=0$ near infinity, so $h=0$ on all of the unbounded component of $\bbC\setminus\fre$ by the identity principle for harmonic functions (\cite[Theorem 3.1.17]{HA}).
\end{proof}

This theorem says that $d\mu_\fre$ is the balayage of $d\mu_\infty$ onto $\partial\fre$, equivalently, the balayage of $d\mu_n$ converges to $d\mu_\fre$; ideas that go back at least to Mhaskar--Saff \cite{MhS}.

The key to the proof of Theorem \ref{T1.1} is

\begin{proposition} \lb{P2.2}  Let $u$ be harmonic and not identically $0$ in a disk, $N$, centered at $z_0$ with $u(z_0)=0$.  Then, by shrinking the radius of $N$, if necessary, one can find $p\in\{1,2,\dots\}$ and $p$ analytic curves, $\gamma_1,\dots,\gamma_p$, with $\gamma_j(0)=z_0$ so that the angle between any two successive tangents, $\gamma_1'(0),\dots,\gamma_p'(0),-\gamma_1'(0),\dots,-\gamma_p'(0)$ is $\pi/p$ and so that
\begin{equation}\label{2.2a}
  \{z\in N \st u(z)=0\} = \{z\in N \st z \textrm{ lies in some } \gamma_j\}
\end{equation}
Moreover, the sign of $u$ alternates between successive sectors.
\end{proposition}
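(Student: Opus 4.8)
The plan is to reduce to the local structure of zeros of a harmonic function near one of its zeros via the classical fact that a harmonic function is locally the real part of an analytic function. Near $z_0$, write $u(z) = \Re F(z)$ for some function $F$ analytic in a (possibly smaller) disk $N$ centered at $z_0$; we may normalize $\Im F(z_0)=0$, so $F(z_0)=0$. Since $u$ is not identically zero, $F$ is not constant, so $F$ has a zero of some finite order $p\ge 1$ at $z_0$: $F(z) = a(z-z_0)^p + O((z-z_0)^{p+1})$ with $a\neq 0$. After a rotation of the picture (i.e.\ replacing $z-z_0$ by $e^{i\theta_0}(z-z_0)$ for a suitable $\theta_0$, which only rotates all the curves and sectors rigidly and does not affect the statement), we may assume $a>0$.

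Next I would use the fact that near a zero of order $p$, $F$ behaves like a $p$-th power: there is a conformal change of variable $\zeta = \varphi(z)$, analytic with $\varphi(z_0)=0$ and $\varphi'(z_0)\neq 0$, such that $F(z) = \varphi(z)^p$ on a smaller disk (take $\varphi(z) = (z-z_0)\bigl(a + O(z-z_0)\bigr)^{1/p}$, a well-defined analytic $p$-th root since the bracket is nonvanishing and near the positive real number $a$). Then $u(z) = \Re\bigl(\varphi(z)^p\bigr)$, and the zero set of $u$ in $N$ is exactly $\varphi^{-1}$ of the zero set of $\Re(\zeta^p)$. The zero set of $\Re(\zeta^p) = r^p\cos(p\vartheta)$ (in polar coordinates $\zeta = re^{i\vartheta}$) is the union of the $2p$ rays $\vartheta = \frac{\pi}{2p} + \frac{k\pi}{p}$, $k=0,\dots,2p-1$, which come in $p$ opposite pairs, and on which $\cos(p\vartheta)$ alternates sign between consecutive rays. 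Pulling these rays back by the (orientation-preserving, angle-preserving at $z_0$) biholomorphism $\varphi^{-1}$ gives $p$ analytic curves $\gamma_1,\dots,\gamma_p$ through $z_0$ (each pair of opposite rays pulls back to one smooth analytic arc through $z_0$, or one may simply take the $p$ half-rays pulling back to $\gamma_1,\dots,\gamma_p$ and their negatives to $-\gamma_1,\dots,-\gamma_p$ as in the statement); since $\varphi'(z_0)\neq 0$, the tangent directions $\gamma_j'(0)$ are the images of the ray directions under multiplication by $\varphi'(z_0)^{-1}$, hence are equally spaced with consecutive angle $\pi/p$, and the sign of $u$ alternates across consecutive sectors because it does so for $\Re(\zeta^p)$ and $\varphi$ is a homeomorphism preserving the cyclic order of the sectors.

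The only genuine point requiring care — and the one I would treat as the main obstacle — is the passage $F(z) = \varphi(z)^p$ and the verification that $\varphi$ is analytic with nonzero derivative at $z_0$; this is standard (it is the local normal form of an analytic function at a zero), but one must shrink $N$ so that the bracket $a + O(z-z_0)$ stays in a disk around $a$ avoiding $0$, so that its analytic $p$-th root exists. Everything else is bookkeeping: translating the explicit description of the nodal lines of $\Re(\zeta^p)$ back through the conformal map, and checking the equal-angle and alternating-sign claims, both of which are conformally invariant features of the model $\Re(\zeta^p)$.
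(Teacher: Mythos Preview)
Your proposal is correct and follows essentially the same route as the paper: write $u$ as the real (the paper uses imaginary, which is equivalent) part of an analytic $F$ vanishing at $z_0$, invoke the local normal form $F=\varphi^p$ with $\varphi$ a local biholomorphism, and read off the nodal structure of $\Re(\zeta^p)$ pulled back through $\varphi^{-1}$. The paper's $f,g,h$ are your $F,\varphi,\varphi^{-1}$, and your explicit discussion of the $p$-th root and the need to shrink $N$ matches the paper's citation of the standard local normal form; the remaining differences are purely notational.
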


\begin{proof} There is a function $f$ analytic in $N$ so that $f(z_0)=0$ and $u(z) = \Im f(z)$.  By a standard result in complex analysis (see, for example \cite[Theorem 3.5]{BCA} and its proof), by shrinking $N$, if necessary, one can find $p\in\{1,2,\dots\}$ and an analytic function, $g$, in $N$ with $f=g^p$, $g(z_0)=0$, $g'(z_0)\ne 0$.  By another standard result in complex analysis (\cite[Theorem 3.4.1]{BCA}), $g$ has an analytic inverse function, $h$ (perhaps by shrinking $N$ further).  Let $\gamma_j(t)=h(te^{2\pi i(j-1)/p})$, $j=1,\ldots,p$ so $g(\gamma_j(t))=te^{\pi i(j-1)/p}$ and $f(\gamma_j(t))=t^p$ and $u(\gamma_j(t))=0$.  The remaining claims are immediate.
\end{proof}

\begin{proof} [Proof of Theorem \ref{T1.1}] If $\liminf M_n(N)/n = 0$, by passing to a subsequence and using compactness of the probability measures on the convex hull of $\fre$, we get a limit point, $d\mu_\infty$, of the zero counting measure with $\mu_\infty(N)=0$.  Let $z_0\in N\cap\partial\fre$.  It follows that $\int \log|z-w| d\mu_\infty(w)$ is harmonic near $z_0$.  By \eqref{2.2} and \cite[(3.6.43)]{HA},
\begin{equation}\label{2.4}
  u(z)=-\log(C(\fre))+\int \log |z-w|\,d\mu_\infty(w)
\end{equation}
equals $G_\fre$ outside $\fre$ and, in particular, is $0$ on $\partial\fre$ since $\fre$ is simply connected and regular for potential theory.

By \eqref{2.4}, $u(z)$ is subharmonic on $\bbC$.  By the maximum principle for subharmonic functions (\cite[Theorem 3.2.10]{HA}), $u(z) < 0$ on $\fre^{\inte}$.  Thus $\partial\fre=\{z \st u(z)=0\}$.

Since $z_0\in\partial\fre\cap N$, $u(z_0)=0$ and we can apply Proposition \ref{P2.2}.  We must have $p=1$ since otherwise, $\partial\fre$ doesn't divide $N$ into two pieces. Proposition \ref{P2.2} completes the proof.
\end{proof}

This argument is modelled after arguments in \cite{ST1990}.  They don't need an apriori assumption on $\partial\fre$ dividing $N$ in two since they make a global assumption on the zeros and, more importantly, they suppose that $\fre^{int}$ is connected.  If we don't make the apriori assumption on $\partial\fre$, we still have, by the above argument that

\begin{theorem} \lb{T2.3} Let $\fre$ be a simply connected, compact subset of $\bbC$ which is regular for potential theory.  Suppose that $z_0\in\partial\fre$ has a neighborhood, $N$, so that $\liminf M_n(N)/n = 0$.  Then for some  $p\in\{1,2,\dots\}$ there are $p$ analytic curves, $\gamma_1,\dots,\gamma_p$ through $z_0$ obeying the $\pi/p$ tangent condition so that (shrinking $N$, if necessary) $\fre\cap N$ is precisely the union of $p$ alternate sectors.
\end{theorem}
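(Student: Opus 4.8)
The plan is to re-run the proof of Theorem~\ref{T1.1}, stopping one step early: reproduce everything up to the application of Proposition~\ref{P2.2} and then simply refrain from invoking the hypothesis ``$N\cap\partial\fre$ divides $N$ in two'' that was used there to force $p=1$.

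First I would use $\liminf M_n(N)/n=0$ to select a subsequence $n_j$ with $M_{n_j}(N)/n_j\to 0$, and then, by compactness of the probability measures supported in $\cvh(\fre)$, pass to a further subsequence along which $\mu_{n_j}$ converges weakly to some $\mu_\infty$. Because $N$ is open, $\mu_\infty(N)\le\liminf_j\mu_{n_j}(N)=0$, so $\mu_\infty(N)=0$ and $z\mapsto\int\log\abs{z-w}\,d\mu_\infty(w)$ is harmonic on $N$. Exactly as in the proof of Theorem~\ref{T1.1} (using Theorem~\ref{T2.1}, \cite[(3.6.43)]{HA}, and regularity of $\fre$), the function $u$ of \eqref{2.4} is subharmonic on $\bbC$, equals $G_\fre$ on the connected unbounded component $\bbC\setminus\fre$, vanishes on $\partial\fre$, and is strictly negative on $\fre^{\inte}$ by the maximum principle for subharmonic functions \cite[Theorem 3.2.10]{HA}; hence $\{u<0\}=\fre^{\inte}$, $\{u=0\}=\partial\fre$, and $\{u>0\}=\bbC\setminus\fre$.

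Next I would observe that $z_0$, being a point of $\partial\fre=O\partial\fre$, lies in the closure of $\bbC\setminus\fre$, so $N$ meets $\bbC\setminus\fre$ and therefore $u\not\equiv 0$ on $N$. Since $u$ is harmonic on $N$ with $u(z_0)=0$, Proposition~\ref{P2.2} applies: after shrinking $N$ if necessary, it produces $p\in\{1,2,\dots\}$ and analytic curves $\gamma_1,\dots,\gamma_p$ through $z_0$ obeying the $\pi/p$ tangent condition, with $\{z\in N:u(z)=0\}$ equal to the union of the $\gamma_j$ and with $u$ alternating in sign across the $2p$ sectors they determine. Reading this against the trichotomy of the previous paragraph, the $\gamma_j$ make up $N\cap\partial\fre$, the $p$ sectors on which $u<0$ are $N\cap\fre^{\inte}$, and the $p$ on which $u>0$ are $N\setminus\fre$; therefore $\fre\cap N=(\fre^{\inte}\cap N)\cup(\partial\fre\cap N)$ is exactly the union of those $p$ alternate closed sectors, which is the assertion.

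I do not expect a genuinely new obstacle: all the analytic content — the potential identity of Theorem~\ref{T2.1}, the subharmonic maximum principle, and the local structure of the zero set of a nonconstant harmonic function in Proposition~\ref{P2.2} — is already available, and Theorem~\ref{T2.3} is precisely what the proof of Theorem~\ref{T1.1} yields once the dividing hypothesis (used there only to force $p=1$) is dropped. The only points that need a moment's care, the same ones as in Theorem~\ref{T1.1}, are that $\mu_\infty$ truly carries no mass on a full neighborhood of $z_0$ — immediate here since $N$ is open and $M_{n_j}(N)/n_j\to 0$ — and that $\{u=0\}=\partial\fre$ globally, which is where the regularity of $\fre$ and the strong maximum principle enter.
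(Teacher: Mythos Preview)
Your proposal is correct and matches the paper's own treatment exactly: the paper does not write a separate proof of Theorem~\ref{T2.3} but simply says ``we still have, by the above argument,'' meaning precisely that the proof of Theorem~\ref{T1.1} up through the invocation of Proposition~\ref{P2.2} yields the conclusion once one drops the hypothesis that forced $p=1$. Your added remark that $u\not\equiv 0$ on $N$ (because $z_0\in O\partial\fre$ so $N$ meets $\bbC\setminus\fre$) is a sensible detail to make explicit.
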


\begin{example} [Lemniscate of Bernoulli] \lb{E2.4} Consider the set
\begin{equation}\label{2.5}
  \fre = \{z \st |z^2-1| \le 1\}
\end{equation}
the simply connected, compact set bounded by the famous lemniscate of Bernoulli \cite{Bern}, a figure eight curve with crossing angle $\pi/2$.  By general principles (see \eqref{3.4} below), for $j=1,2,\dots$
\begin{equation}\label{2.6}
  T_{2j}(z)=(z^2-1)^j
\end{equation}
whose zeros are only at $z=\pm 1$, so the limit of the zero counting measure through the sequence of even orders is
\begin{equation}\label{2.6a}
  d\mu_\infty = \frac{1}{2}[\delta(z-1)+\delta(z+1)]
\end{equation}
which gives zero weight to the entire boundary of $\fre$.  We precisely have a point as in the last theorem with $p=2$.  Note that $T_n(-z)=(-1)^nT_n(z)$ by the uniqueness of Chebyshev polynomials so $T_{2j+1}(0)=0$.  We suspect (but cannot prove) that for $j$ large all the other zeros of $T_{2j+1}$ lie in small neighborhoods of $\pm 1$ and that the above $d\mu_\infty$ is also the limit through odd $n$'s.  The paper of Saff--Totik \cite{ST1990} shows that when $\fre^{\inte}$ is connected, one has that zero density on $\partial\fre$ implies no zeros at all in a neighborhood of $\partial\fre$.  If our surmise is correct, this example shows that that result does not extend when $\fre^{\inte}$ is not connected.  \qed
\end{example}

One Corollary of Theorem \ref{T1.1} is

\begin{corollary} \lb{C2.5} If $\fre$ is a Jordan curve whose boundary is nowhere analytic, then every point on the boundary is a limit of zeros of $T_n$
\end{corollary}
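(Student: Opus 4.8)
The plan is to deduce Corollary \ref{C2.5} directly from Theorem \ref{T1.1} by a simple contrapositive/localization argument. Let $K$ be a Jordan region whose boundary curve $\partial K$ is nowhere analytic. First I would fix an arbitrary point $z_0 \in \partial K$ and produce a suitable test neighborhood $N$: since $\partial K$ is a Jordan arc near $z_0$, I can choose a small open disk $N$ centered at $z_0$ small enough that $N \cap \partial K$ is a continuous arc that cuts $N$ into exactly the two pieces $K^{\inte}\cap N$ and $(\bbC\setminus K)\cap N$. (This uses only that $\partial K$ is locally a Jordan arc and the Jordan curve theorem; for a Jordan region the interior is connected and $K = \hatt K$, so $K$ is simply connected in the sense used here, and one may assume it is regular for potential theory — or restrict the statement accordingly.) Thus the hypotheses of Theorem \ref{T1.1} are met for this $N$.

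Next I would apply Theorem \ref{T1.1}: for this $N$, either $\liminf_n M_n(N)/n > 0$ or $N \cap \partial K$ is an analytic arc. The second alternative is excluded by the hypothesis that $\partial K$ is nowhere analytic — in particular it is not analytic on the subarc $N\cap\partial K$. Hence $\liminf_n M_n(N)/n > 0$. In particular $M_n(N) \geq 1$ for all large $n$, so $N$ contains a zero of $T_n$ for all sufficiently large $n$. Since the radius of $N$ can be taken arbitrarily small, this shows $z_0$ is a limit point of zeros of $T_n$ (indeed one can extract, for each $k$, a zero $w_{n_k}^{(k)}$ of $T_{n_k}$ within distance $1/k$ of $z_0$). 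As $z_0 \in \partial K$ was arbitrary, every boundary point is such a limit, which is the assertion.

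I do not expect a genuine obstacle here; the corollary is essentially a packaging of Theorem \ref{T1.1} together with the observation that a nowhere-analytic arc is in particular non-analytic on every subarc. The only point requiring a little care is the construction of the dividing neighborhood $N$: one must verify that for a Jordan region, small disks centered at boundary points really do split into exactly the two components $K^{\inte}\cap N$ and $(\bbC\setminus K)\cap N$ with $N\cap\partial K$ a single crosscut arc. This follows from the local structure of a Jordan curve (it is a homeomorphic image of an interval near $z_0$) and from $K$ being the closure of its connected interior, so no spurious components arise. With that in hand the deduction is immediate, and I would present it in just a few lines.
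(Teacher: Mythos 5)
Your proposal is correct and is exactly the deduction the paper intends: the corollary is stated as an immediate consequence of Theorem \ref{T1.1}, obtained by applying that theorem to a small disk about each boundary point and ruling out the analytic-arc alternative. The only detail worth noting is that a Jordan region is automatically regular for potential theory (its boundary is a non-degenerate continuum), so the hedge about regularity is unnecessary.
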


We suspect that much more is true.

\begin{conjecture} \lb{C2.6} \emph{If $\fre$ is a Jordan curve whose boundary is nowhere analytic, then the density of zeros measures converges to the equilibrium measure for $\fre$.}
\end{conjecture}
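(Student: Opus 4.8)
The plan is to reduce Conjecture \ref{C2.6} to a statement about where the zeros of $T_n$ accumulate, and then to attack that statement by a quantitative local analysis near $\partial\fre$. (We read $\fre$ here as a Jordan region with nowhere-analytic boundary curve; if $\fre$ had empty interior the conclusion would already follow from Theorem \ref{TB}, so the content is for genuine Jordan regions.) Let $\mu_\infty$ be any weak-$*$ limit point of $\{\mu_n\}$, say $\mu_{n_j}\to\mu_\infty$; since the $\mu_n$ are probability measures on the compact set $\cvh(\fre)$ it suffices to show $\mu_\infty=\mu_\fre$ for each such $\mu_\infty$. By Theorem \ref{T2.1}, $\mu_\infty$ is supported in $\fre$ and $\int\log|z-w|\,d\mu_\infty(w)=\int\log|z-w|\,d\mu_\fre(w)$ for $z$ in the (connected) complement $\bbC\setminus\fre$; equivalently, as noted after Theorem \ref{T2.1}, $\mu_\fre$ is the balayage of $\mu_\infty$ out of $\fre^{\inte}$. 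Since $\fre$ is a Jordan region, $\mu_\fre$ is carried by $\partial\fre$, and balayage out of $\fre^{\inte}$ is the identity on measures already carried by $\partial\fre$; hence the conjecture is equivalent to
\[
\mu_\infty(\fre^{\inte})=0\qquad\text{for every limit point }\mu_\infty,
\]
i.e.\ to the assertion that a positive fraction of the zeros of $T_n$ cannot remain at a fixed positive distance inside $\fre$ along any subsequence.

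Two facts bracket this target. First, combining $\norm{T_n}_\fre\ge|T_n(z)|$ for $z\in\fre$ with $\norm{T_n}_\fre^{1/n}\to C(\fre)$ and the lower-envelope theorem for logarithmic potentials gives $\int\log|z-w|\,d\mu_\infty(w)\le\log C(\fre)$ throughout $\fre^{\inte}$, which is exactly the (constant) value of the potential of $\mu_\fre$ there; so $v(z):=\int\log|z-w|\,d\mu_\infty(w)-\log C(\fre)$ is a nonpositive subharmonic function on $\fre^{\inte}$ with $\Delta v=2\pi\,\mu_\infty|_{\fre^{\inte}}$, and the goal becomes to show $v\equiv0$. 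Second, Theorem \ref{T2.3} applied at an arbitrary $z_0\in\partial\fre$ shows, using that $\partial\fre$ is \emph{nowhere} analytic, that $z_0$ has no neighborhood $N$ with $\liminf_n M_n(N)/n=0$; consequently every boundary point carries positive $\mu_\infty$-mass in each of its neighborhoods and $\supp\mu_\infty\supseteq\partial\fre$. The plan is to upgrade this qualitative statement to a quantitative local lower bound $\mu_\infty(N)\ge\mu_\fre(N)-\veps$ for small boundary neighborhoods $N$: summed over a sufficiently fine partition of $\partial\fre$ it would force $\mu_\infty(\partial\fre)\ge1$, hence $v\equiv0$ and $\mu_\infty=\mu_\fre$.

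The main obstacle is precisely this quantitative upgrade. Theorem \ref{T2.3} (like Theorem \ref{T1.1}) is only a dichotomy --- an analytic boundary arc, or positive density --- and controls neither \emph{how much} zero density appears near a nonanalytic boundary point nor whether a fixed positive fraction of the zeros might instead drift onto a compact subset of $\fre^{\inte}$; Widom's Theorem \ref{TE} constrains only the zeros in the \emph{outer} component, so it does not help, and the figure-eight Example \ref{E2.4} --- where the whole limiting zero mass sits at the two interior points $\pm1$ --- shows that such interior accumulation genuinely occurs once $\fre^{\inte}$ is disconnected, so any proof must use connectedness of $\fre^{\inte}$ essentially. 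A natural line of attack is a localized version of the uniqueness argument of Section \ref{s1}: if a cluster of zeros of $T_{n_j}$ sat at distance $\ge\de$ inside $\fre$ opposite a nowhere-analytic arc of $\partial\fre$, one would like to produce a monic polynomial of the same degree with strictly smaller sup-norm on $\fre$ by a Lagrange-type perturbation that moves those zeros toward $\partial\fre$, contradicting the extremality of $T_{n_j}$. Making such a perturbation estimate uniform enough to survive the limit $j\to\infty$ is the gap we have not been able to close, and is why the statement is recorded only as a conjecture.
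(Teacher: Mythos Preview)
The paper records Conjecture~\ref{C2.6} as an open problem and offers no proof; there is nothing to compare your attempt against. You recognize this yourself in your final sentence, so your write-up is really a heuristic reduction together with an honest statement of where the argument stalls, not a proof proposal in the usual sense.

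Your reduction is sound as far as it goes. The passage from ``$\mu_\infty$ has the same exterior potential as $\mu_\fre$'' to ``it suffices that $\mu_\infty(\fre^{\inte})=0$'' is correct for a Jordan region (once $\mu_\infty$ lives on $\partial\fre$, harmonicity in $\fre^{\inte}$ forces the two potentials to agree there as well, and then the distributional-Laplacian argument of the proof of Theorem~\ref{T1.2} identifies the measures, at least when $\partial\fre$ has planar measure zero; if you want to cover Osgood-type boundaries you should flag that as a further technicality). Your use of Theorem~\ref{T2.3} to get $\partial\fre\subset\supp\mu_\infty$ is also correct, though note the Portmanteau direction: you get $\mu_\infty(\overline{N})>0$ for closed neighborhoods, which is enough for the support claim.

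The gap you name is the real one and is exactly why the statement remains a conjecture: nothing in the paper's toolkit rules out a fixed positive fraction of zeros accumulating on a compact subset of $\fre^{\inte}$, and Example~\ref{E2.4} shows that without connectedness of $\fre^{\inte}$ this genuinely happens. Your proposed Lagrange-type perturbation idea is plausible in spirit but, as you say, making it quantitative and uniform in $n$ is the whole difficulty; the paper does not attempt it, and neither the Saff--Totik nor the Saff--Stylianopoulos machinery cited there gives such control for a general nowhere-analytic Jordan boundary.
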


It is an intriguing question to understand when the density of zeros measure converges to the equilibrium measure.  An interesting result on this question is in Saff--Stylianopoulos \cite{SS2015} who prove that if $\partial\fre$ has an inward pointing corner in a suitable sense, then the density of zeros converges to the equilibrium measure.  For example, if $\fre$ is a polygon that is not convex, then their hypothesis holds.

It would be useful to know what happens for convex polygons; the simplest example is the equilateral triangle.  Theorem \ref{T1.1} implies that at least the vertices of the triangle are density points of zeroes.  We wonder what other points are density points of zeros (there must be others since the balayage of the average of the point masses at the corners is not the equilibrium measure).  It seems to us there are only two reasonable guesses.  Either the entire boundary are limit points of zeros (in which case it is likely the density of zeros converges to the equilibrium measure) or else the limit points are the skeleton obtained from the line segments from the centroid of the triangle to the vertices.  \cite[Figure 3]{SS2015},  which admittedly is for the Bergmann polynomials, not the Chebyshev polynomials, suggest the skeleton is the more likely answer.  We hope some numerical analyst will explore this example.

Next we turn to the proof of Theorem \ref{T1.2}.

\begin{proof} [Proof of Theorem \ref{T1.2}]  For $\kappa$ a measure of compact support on $\bbC$, we define, for all $z\in\bbC$, its antipotential by
\begin{equation}\label{2.7}
  \Phi_\kappa(z) = \int \log|z-w|\, d\kappa(w)
\end{equation}
(where the integral either converges or diverges to $-\infty$).  It is subharmonic and locally $L^1$ and behaves like $\left(\int d\kappa\right) \log|z|$ near infinity, so it defines a tempered distribution and its distributional Laplacian obeys
\begin{equation}\label{2.8}
   \Delta\Phi_\kappa = 2\pi\kappa
\end{equation}
(see \cite[Section 6.9]{RA} and \cite[Section 3.2]{HA}).

Now let $\mu_\infty$ be a limit point of the zero counting measure.  By \eqref{2.2}, $\Psi_{\mu_\infty}(z) = \Psi_{\mu_\fre}(z)$ for $z\in N\setminus\fre$.  Since the functions are $L^1$ and $N\cap\fre$ has Lebesgue measure zero, we conclude they define the same distributions on $N$. By \eqref{2.8}, $\mu_{\infty}\restriction N = \mu_{\fre}\restriction N$.  Since the restrictions of all limit points agree, we see the restrictions of the zero counting measures to $N$ converge and converge to  $\mu_{\fre}\restriction N$.
\end{proof}

For the case where one has a global assumption on $\fre$ (i.e.\ where $N$ is a very large disk), our result is somewhat weaker than that of Blatt--Saff--Simikani \cite{BSS1988} in that they only require that $\fre^{\inte}$ is empty while we require that $\fre$ have two-dimensional Lebesgue measure zero.  Their arguments are global and do no appear to work with only a local assumption.  On the other hand, Totik \cite{TotikPC} has sent us an example (reproduced below) of two distinct measures, $\mu$ and $\nu$, with $\Phi_\mu=\Phi_\nu$ off a set, $\fre$, with $\fre^{\inte}$ empty, so our method doesn't seem capable of extending to the case where one only supposes that $N\cap\fre^{\inte}$ is empty.

\begin{example}[\cite{TotikPC}]
Let $D_0=\bbD$ be the open unit disk and define recursively $D_{n}$ to be $D_{n-1}$ with a small closed disk removed. Assume that $0\in D_n$ and let $\mu_n$ be the balayage of $\de_0$ onto $\partial D_n$ so that the potentials of $\de_0$ and $\mu_n$ coincide on $\bbC\setminus\ol{D_n}$ (see \cite[Section II.4]{SaT} for the notion of balayage). The center of the removed disk can be chosen arbitrarily in $D_{n-1}\setminus\{0\}$ while the radius we choose small enough to ensure $0\in D_n$ and $\mu_n(\partial\bbD)>\f12$. Now choose centers of the removed disks in such a way that $\fre=\cap_n\ol{D_n}$ is nowhere dense and let $\mu$ be a weak limit of the $\mu_n$'s. Then both $\mu$ and $\de_0$ are supported on $\fre$, the potentials of $\mu$ and $\de_0$ are the same on $\bbC\setminus\fre$, and $\mu$ is not $\de_0$ since $\mu(\partial\bbD)\ge\f12$.
\end{example}

\section{Lemniscates} \lb{s3}

We now turn to the study of when Widom factors, $\calW_n(\fre)=t_n/C(\fre)^n$, are bounded as $n\to\infty$ and explicit bounds on $\sup_n \calW_n(\fre)$.  In this section, we will prove Theorem \ref{T1.3}.  It is a very small addendum to our discussion of lemniscates in \cite{CSZ3}.  Solid lemniscates are defined by \eqref{1.7} where, without loss, we can suppose that $P_n$ is a monic polynomial of degree $n$. The Green's function, $G_{\frf_n}$, of $\frf_n$ is clearly given by
\begin{equation}\label{3.1}
  G_{\frf_n}(z) = \frac{1}{n} \log\left(\frac{|P_n(z)|}{\alpha}\right)
\end{equation}
from which it follows that
\begin{equation}\label{3.2}
  C(\frf_n)=\alpha^{1/n}
\end{equation}
Thus
\begin{equation}\label{3.3}
  \norm{P_n^k}_{\frf_n} = \alpha^k = C(\frf_{nk})^{nk} \le t_{nk}
\end{equation}
by the Szeg\H{o} lower bound.  Since $P_n^k$ is monic we see that
\begin{equation}\label{3.4}
  T_{nk} = P_n^k; \qquad \calW_{nk}(\frf_n) = 1
\end{equation}

\begin{proof} [Proof of Theorem \ref{T1.3}]  Since, for any compact set, $\fre$, $T_jT_\ell$ is a monic polynomial of degree $j+\ell$ with $\norm{T_jT_\ell}_\fre \le \norm{T_j}_\fre \norm{T_\ell}_\fre$, we see that $t_{j+\ell} \le t_j t_\ell$.  It follows that
\begin{equation*}
  \calW_{nk+j}(\frf_n) \le \calW_{nk}(\frf_n) \calW_j(\frf_n) = \calW_j(\frf_n)
\end{equation*}
proving that
\begin{equation}\label{3.5}
  \sup_m \calW_m(\frf_n) = \max_{j=0,\dots,n-1} \calW_j(\frf_n)
\end{equation}
which is the assertion of \eqref{1.9}
\end{proof}

\section{Level Sets of Green's Functions} \lb{s4}

In this section, we will prove Theorems \ref{T1.4}--\ref{T1.6}.  We start with Theorem \ref{T1.5}.

\begin{proof} [Proof of Theorem \ref{T1.5}] By definition of $\fre^\alpha$, we have that $G_{\fre^\al}=G_\fre-\al$ is the Green's function for $\fre^\al$, which implies that $C(\fre^\al)=e^\al C(\fre)$ and also $(\fre^\al)^\be=\fre^{\al+\be}$. 
Thus, it suffices to show that $\calW_n(\fre^\al)\le \calW_n(\fre)$ for $\al>0$.

Let $T_n$ be the $n$th Chebyshev polynomial of $\fre$. Define 
\begin{align}
    \frf_n=\{z\in\bbC \st |T_n(z)|\le\|T_n\|_\fre\}
\end{align} 
Then, as discussed in the previous section, $T_n$ is also the $n$th Chebyshev polynomial for $\frf_n$ and $G_{\frf_n}(z)=\tfrac{1}{n}\log\tfrac{|T_n(z)|}{\|T_n\|_\fre}$ is the Green's function for $\frf_n$ so
\begin{equation}\label{4.1}
   |T_n(z)|=\|T_n\|_\fre \exp\{nG_{\frf_n}(z)\}
\end{equation}

Let $T^\al_n$ denote the Chebyshev polynomials for $\fre^\al$ and define $\frf_n^\al=\{z \st G_{\frf_n}(z)=\al\}$.  Since $\fre\subset\frf_n$, we have $G_\fre\ge G_{\frf_n}$ and hence $\fre^\al$ lies inside $\frf_n^\al$. It follows that $\|T^\al_n\|_{\fre^\al}\le \|T_n\|_{\fre^\al}\le \|T_n\|_{\frf_n^\al}$ and so, by \eqref{4.1}, $\|T^\al_n\|_{\fre^\al}\le \|T_n\|_\fre e^{n\al}$. Dividing by $C(\fre^\al)^n=e^{n\al}C(\fre)^n$ yields $\calW_n(\fre^\al)\le \calW_n(\fre)$.
\end{proof}

In the case of $\fre\subset\bbR$, the bound $\calW_n(\fre^\al)\le \calW_n(\fre)$, $\al\ge0$, can be improved:

\begin{proof} [Proof of Theorem \ref{T1.6}]
Let $\fre_n=\{x\in\bbR \st |T_n(x)|\le\|T_n\|_\fre\}$. Then, by (2.4) in \cite{CSZ1}, we have that
\begin{equation}\label{4.2}
   |T_n(z)|\le\tfrac{1}{2}\|T_n\|_\fre\bigl(\exp\{nG_{\fre_n}(z)\}+ \exp\{-nG_{\fre_n}(z)\}\bigr)
\end{equation}

Let $T_n^\al$ denote the Chebyshev polynomial of $\fre^\al$. Since $\fre\subset\fre_n$, we have $G_\fre\ge G_{\fre_n}$ and hence $\fre^\al$ lies inside $\fre_n^\al$. It follows that $\|T_n^\al\|_{\fre^\al}\le \|T_n\|_{\fre^\al}\le \|T_n\|_{\fre_n^\al}\le \tfrac{1}{2}\|T_n\|_\fre(e^{n\al}+e^{-n\al})$ using \eqref{4.2} on $\partial\fre^\al$.

Thus, by \eqref{1.6}, we have that
\begin{align}
  t_n^{(\fre^\al)} &\le \exp\{ PW(\fre) \} (e^{n\al}+e^{-n\al}) C(\fre)^n \nonumber \\
                   &= \exp\{ PW(\fre) \} (1+e^{-2n\al}) C(\fre^\al)^n \label{4.3}
\end{align}
\end{proof}

To prove Theorem \ref{T1.4}, we need a complex variant of the Alternation Theorem (\cite[Theorem 1.1]{CSZ1}):

\begin{lemma} \lb{L4.1} Suppose $K\subset\bbC$ is a compact set and $P$ is a monic degree $n$ polynomial such that $K_0=\{z\in K \st P(z)=\pm\|P\|_K\}$ contains $2n$ points counting multiplicities. Then $P$ is the $n$th Chebyshev polynomial of $K$.
\end{lemma}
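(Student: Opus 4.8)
The strategy is to mimic the uniqueness argument for Chebyshev polynomials sketched in the introduction, but to extract from the hypothesis on $K_0$ the information that \emph{any} norm minimizer must agree with $P$ at enough points. Let $Q$ be the $n$th Chebyshev polynomial of $K$, so $\|Q\|_K = t_n^{(K)} \le \|P\|_K$. Consider the averaged monic polynomial $h = \tfrac12(P+Q)$, which satisfies $\|h\|_K \le \tfrac12(\|P\|_K + \|Q\|_K) \le \|P\|_K$. The key point is that at each of the $2n$ points (with multiplicity) in $K_0$ where $P$ attains $\pm\|P\|_K$, the function $h$ must also attain modulus $\|P\|_K$: indeed at such a point $z$, $|P(z)| = \|P\|_K$ and $|Q(z)| \le t_n^{(K)} \le \|P\|_K$, and $|h(z)| \le \|P\|_K$, so the only way to have $|P(z)+Q(z)| = 2\|P\|_K$ would be $Q(z) = P(z)$; but if instead $|h(z)| < \|P\|_K$ we would need to rule that out. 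Here is where one uses that the bound $\|h\|_K \le \|P\|_K$ is actually an equality at those points — this requires knowing $\|P\|_K = t_n^{(K)}$, which is not yet established, so the argument must be organized more carefully.

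The clean way around this circularity is the following. First show $\|P\|_K = t_n^{(K)}$, i.e.\ that $P$ is \emph{a} minimizer, and simultaneously that it equals $Q$. Suppose $\|Q\|_K < \|P\|_K$ or $Q \ne P$; set $R = P - Q$, a polynomial of degree $\le n-1$ (not identically zero if $Q \ne P$, and we will see it cannot vanish identically if $\|Q\|_K < \|P\|_K$ either). At each point $z \in K_0$ where $P(z) = \varepsilon\|P\|_K$ with $\varepsilon \in \{+1,-1\}$, we have $\Re(\bar\varepsilon R(z)) = \|P\|_K - \Re(\bar\varepsilon Q(z)) \ge \|P\|_K - |Q(z)| \ge \|P\|_K - t_n^{(K)} \ge 0$. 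So $R$ has the property that $\Re(\bar\varepsilon R) \ge 0$ at each extreme point, with the sign $\varepsilon$ matching that of $P$. Now I would invoke the multiplicity/counting structure: at a point of multiplicity $m$ in $K_0$, $P$ attains its extremal modulus to order $m$, which forces the first $m-1$ derivatives of the relevant real expression to vanish and the analogue of a double-root condition on $R$. The goal is to conclude that the degree $\le n-1$ polynomial $R$ is forced to vanish to total order $\ge 2n$ (each simple extreme point contributes... ) — wait, that over-counts; rather one shows $R$ must be identically zero by a dimension/sign count, exactly paralleling the real alternation theorem proof in \cite[Theorem 1.1]{CSZ1}.

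\textbf{Main obstacle and how to handle it.} The delicate part is the bookkeeping of multiplicities: when $z_0 \in K_0$ has multiplicity $m > 1$, the statement ``$P(z_0) = \pm\|P\|_K$'' together with analyticity constrains $R = P-Q$ near $z_0$. The right formulation is that $g(z) := \bar\varepsilon(\|P\|_K - \bar\varepsilon P(z))\cdot(\text{something})$... more directly: since $|P|$ has a local max of $\|P\|_K$ at $z_0$ along $K$ and, by the multiplicity hypothesis, $P(z) - \varepsilon\|P\|_K$ vanishes to order $m$ at $z_0$, one gets that $P'(z_0) = \dots = P^{(m-1)}(z_0) = 0$. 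Combined with $\|Q\|_K \le \|P\|_K$ and the maximum principle applied to $\bar\varepsilon Q(z)/\|P\|_K$ on a neighborhood, one deduces $Q^{(j)}(z_0) = P^{(j)}(z_0)$ for $j = 0,\dots,m-1$, so $R$ vanishes to order $\ge m$ at $z_0$. Summing over all points of $K_0$, $R$ vanishes to total order $\ge 2n$. Since $\deg R \le n-1$, this forces $R \equiv 0$, i.e.\ $Q = P$; consequently $\|P\|_K = \|Q\|_K = t_n^{(K)}$, and by uniqueness of the Chebyshev polynomial (proved in the introduction) $P = T_n^{(K)}$. I expect the neighborhood-maximum-principle step that upgrades ``$|Q(z_0)| \le \|P\|_K = |P(z_0)|$'' to matching of $m-1$ derivatives to be the step needing the most care; a Schwarz-lemma / boundary-point argument (or the observation that $1 - \bar\varepsilon Q/\|P\|_K$ has nonnegative real part and vanishes to high order) does the job cleanly.
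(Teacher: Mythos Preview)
Your argument has a genuine gap at exactly the step you flagged as delicate. You want to deduce $Q^{(j)}(z_0)=P^{(j)}(z_0)$ for $j=0,\dots,m-1$ from $\|Q\|_K\le\|P\|_K$ via a Schwarz--lemma or maximum--principle argument applied to $\bar\varepsilon Q/\|P\|_K$ (or to $1-\bar\varepsilon Q/\|P\|_K$) ``on a neighborhood.'' But every such tool requires the bound $|Q(z)|\le\|P\|_K$ (equivalently, $\Re(1-\bar\varepsilon Q/\|P\|_K)\ge0$) to hold on an \emph{open} set containing $z_0$; the hypothesis gives it only on $K$, and $K$ need have no interior at all --- indeed the lemma must cover the case $K=K_0$, a finite set of $2n$ points. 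In that situation there is no neighborhood on which the bound holds, and even the $j=0$ conclusion $Q(z_0)=P(z_0)$ does not follow: knowing $|Q(z_0)|\le\|P\|_K$ and $P(z_0)=\pm\|P\|_K$ places $Q(z_0)$ in a closed disk but nowhere near forces it to the specific boundary point $P(z_0)$. The inequality $\Re(\varepsilon_j R(z_j))\ge0$ you derived is correct, but it is only a half--plane condition at each of the $2n$ points, and a nonzero polynomial of degree $\le n-1$ (which has $2n$ real degrees of freedom) can certainly satisfy $2n$ such linear inequalities; unlike the real alternation setting, there is no ordering of the $z_j$ that converts these sign conditions into forced zeros of $R$.

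The paper's proof takes a completely different and much shorter route that sidesteps this difficulty. It first observes that the full preimage $P^{-1}(\{\pm\|P\|_K\})$ consists of exactly $2n$ points (with multiplicity), so the hypothesis forces $K_0=P^{-1}(\{\pm\|P\|_K\})$. Since $T_1(w)=w$ is the Chebyshev polynomial of the two--point set $\{\pm\|P\|_K\}$, the composition theorem \cite[Theorem~6.1]{CSZ3} gives that $P=T_1\circ P$ is the $n$th Chebyshev polynomial of $K_0$. Finally, since $K_0\subset K$ and $\|P\|_{K_0}=\|P\|_K$, any monic degree-$n$ competitor $S$ satisfies $\|S\|_K\ge\|S\|_{K_0}\ge\|P\|_{K_0}=\|P\|_K$, so $P=T_n^{(K)}$. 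If you want to rescue a direct argument, the right target is to prove that $P$ is the Chebyshev polynomial of the \emph{finite} set $K_0$ --- that is where the real content lies, and it is not an alternation/derivative--matching statement.
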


\begin{proof}
Note that $T_1(z)=z$ is the Chebyshev polynomial of the two point set $\fre=\{\pm\|P\|_K\}$. Thus, by \cite[Theorem 6.1]{CSZ3}, $P=T_1\circ P$ is the Chebyshev polynomial of $\fre_P=P^{-1}(\fre)=\{z\in\bbC \st P(z)\in\fre\}\supset K_0$. Since $P$ has degree $n$, $\fre_P$ consists of $2n$ points. Hence $\fre_P=K_0\subset K$. Since $\|P\|_K=\|P\|_{\fre_P}$, it follows that $P$ is also the $n$th Chebyshev polynomial of $K$.
\end{proof}

\begin{proof} [Proof of Theorem \ref{T1.4}]
Since a period-$n$ set is also a period-$kn$ set for each $k=1,2,\dots$, it suffices to prove the result for $k=1$. As in \cite[(2.4)]{CSZ1}, we have that
\begin{equation}\label{4.4}
    T_n(z)=\tfrac{1}{2}\|T_n\|_\fre\bigl(B_n(z)^n+B_n(z)^{-n}\bigr)
\end{equation}
where $B_n$ is the analytic multi-valued Blaschke function defined as a complexification of $|B_n|=\exp\{-G_{\fre_n}\}$. Then $|B_n^{-n}|=e^{n\al}$ on $\fre_n^\al$ and hence the extremal values of $T_n$ on $\fre_n^\al$ are $\pm\cosh(n\al)\|T_n\|_\fre$ which occur at the $2n$ points $\{z\in\fre_n^\al \st B_n(z)^{-n}=\pm e^{n\al}\}$. Thus, by the lemma, $T_n$ is the $n$th Chebyshev polynomial of $\fre_n^\al$.
\end{proof}

\section{Do Totik--Widom Bounds hold for the Connected, Simply Connected Case} \lb{s5}

From the time we proved that all Parreau--Widom sets (henceforth PW) in $\bbR$ have the TW property, whether this result extends to $\fre\subset\bbC$ has been an interesting open question.  Initially, we thought it was likely true.  We realized that a key test case was where $\fre$ is a connected, simply connected (henceforth CSC) set.  In that case, it is a consequence of the Riemann mapping theorem that $G_\fre$ has no critical points on $\bbC\setminus\fre$, so the PW condition holds.  If PW$\Rightarrow$TW for general $\fre\subset\bbC$, then clearly every CSC set obeys TW.  And if PW$\Rightarrow$TW is false, it likely fails for some CSC set.

So, for several years, we have discussed widely the need to look at this question for CSC sets.  It goes back to Faber \cite{Faber} that if $\fre$ is Jordan region with analytic boundary, then $\lim_{n\to\infty} \calW_n(\fre)=1$ so TW holds.  Widom \cite{Widom} extended this to $C^{2+\veps}$ boundary.

We suggested in several talks that if TW fails, it likely fails for the Koch snowflake but this set is more regular that one might think -- it is a quasidisk.  Andrievski \cite{And} and Andrievski--Nazarov \cite{AndNaz} proved that every quasidisk has the TW property, so the Koch snowflake does not provide a counterexample.  

Here, we want to suggest several additional places to look for counterexamples.

(1) \emph{Koch antennae.} Recall the construction of the Koch snowflake.  One starts with $T_1$, a solid equilateral triangle in $\bbC$ with side $1$.  One adds, $T_2$, the three equilateral triangles of side $\tfrac{1}{3}$ centered on the midpoints of the sides of $T_1$.  Then $K_2=T_1\cup T_2$ has $4\times 3$ sides with size $\tfrac{1}{3}$.  At stage $j$, $K_j=K_{j-1}\cup T_j$ has $3\times 4^{j-1}$ sides of size $3^{-(j-1)}$.  $T_{j+1}$ is then the $3\times 4^{j-1}$ triangles with side $3^{-j}$ centered at the midpoints of the sides of $K_j$.  $K_\infty=\cup_{j=1}^\infty K_j$ is the Koch snowflake, a Jordan region whose boundary is a non-rectifiable curve of Hausdorff dimension strictly greater than $1$.  But it is regular in the sense that it is a quasidisk.

Modify this construction by picking $a_1, a_2,\dots$ all in $(0,1]$.  $K_j$ still has $3 \times 4^{j-1}$ sides but now of size $s_j$ defined inductively starting with $s_1=1$.  The $3\times 4^{j-1}$ triangles of $T_{j+1}$ are now isosceles with base $\tfrac{1}{3}a_js_j$ and two equal sides $\tfrac{1}{2}(1-\tfrac{1}{3}a_j)s_j=s_{j+1}$.  The limit $K_\infty$ is still a Jordan region with non-rectifiable boundary of dimension larger than $1$.  With the case $a_j\downarrow 0$ rapidly in mind, we call this the Koch antenna (although, so far as we know, Koch never considered it!).  If $\liminf a_j = 0$, $K_\infty$ is not a quasidisk and \cite{And,AndNaz} do not apply.  We believe that the case $a_j=3^{-j}$ is a good candidate for a situation where TW might fail.  An extreme case is what happens if all $a_j=0$ so the added ``triangles'' are line segments (we need to destroy the symmetry by taking $s_{j+1}=\beta s_j$ with $\beta$ strictly less than $1/2$ to avoid the lines in $T_j$ from intersecting).  The boundary is no longer a Jordan curve although it is the image of a circle under a continuous map.

(2) \emph{The Cauliflower.}  The Cauliflower is the Julia set of the map $z^2+z$; see, for example, Milnor \cite[Figure 2.4]{Milnor}.  This has inward pointing cusps so, by \cite{SS2015}, the density of zeros approaches the equilibrium measure. Since there has been previous work \cite{BGH1, BGH2, Bessis, KB, Alpan, AG} on extremal polynomials on Julia sets (albeit certain disconnected Julia sets where PW fails), this might be an approachable example.

(3) \emph{Non-Jordan Regions.} All examples discussed so far in the context of TW bounds have been Jordan regions in that $\partial\fre$ is a simple closed continuous curve.  Examples like the lemniscate, the extreme antenna (i.e.\ all $a_j=0$) or even a disk with a spike ($\overline{\bbD}\cup [1,2]$) aren't Jordan regions but at least their boundaries are images of a continuous curve.  But there are CSC regions whose boundaries are not images of continuous curves or even boundaries with inaccessible points.  A good example is the open set
\begin{equation}\label{5.1}
  \Omega=(0,1)\times(0,1)\setminus \bigcup_{n=1}^\infty\bigl\{(1-\tfrac{1}{2n},y) \cup(1-\tfrac{1}{2n+1},1-y)\,|\,y\in[0,\tfrac{3}{4}]\bigr\}
\end{equation}
of \cite[Figure 8.2.1]{BCA}.  Of course, $\Omega$ is open and $\overline{\Omega}$ is a Jordan region but $\fre = \{(z-z_0)^{-1} \st z \in\bbC\setminus\Omega\}$, where $z_0\in\Omega$, is a compact set whose boundary has tangled spikes and the boundary is not continuous nor everywhere accessible from the outside.  Our point here is not that this example should be analyzed but that while searching for possible counterexamples to ``every CSC set is TW'', one needs to consider sets whose boundary is not a continuous curve.

In any event, we regard finding either a non-TW example among the CSC sets or else proving that all CSC sets are TW one of the most important open questions in the theory of Chebyshev polynomials.

\end{document}